\newcommand{\setword}[2]{%
  \phantomsection
  #1\def\@currentlabel{\unexpanded{#1}}\label{#2}%
}
\definecolor{uuuuuu}{rgb}{0.26666666666666666,0.26666666666666666,0.26666666666666666}
\definecolor{xdxdff}{rgb}{0.49019607843137253,0.49019607843137253,1.}
\definecolor{ffqqqq}{rgb}{1.,0.,0.}
\definecolor{ffqqqq}{rgb}{1.,0.,0.}
\definecolor{ffxfqq}{rgb}{1.,0.4980392156862745,0.}
\definecolor{uuuuuu}{rgb}{0.26666666666666666,0.26666666666666666,0.26666666666666666}
\definecolor{qqwuqq}{rgb}{0.,0.39215686274509803,0.}
\definecolor{zzttqq}{rgb}{0.6,0.2,0.}
\definecolor{xdxdff}{rgb}{0.49019607843137253,0.49019607843137253,1.}
\definecolor{qqqqff}{rgb}{0.,0.,1.}
\definecolor{cqcqcq}{rgb}{0.7529411764705882,0.7529411764705882,0.7529411764705882}
\definecolor{sqsqsq}{rgb}{0.12549019607843137,0.12549019607843137,0.12549019607843137}
\theoremstyle{plain}
\newtheorem{theorem}[subsection]{Theorem}
\newtheorem{corollary}[subsection]{Corollary}
\newtheorem{lemma}[subsection]{Lemma}
\newtheorem{prop}[subsection]{Proposition}
\theoremstyle{definition}
\newtheorem{exam}[subsection]{Example}
\newtheorem{remark}[subsection]{Remark}
\newtheorem{note}[subsection]{Note}
\newcommand{\uu}{\cup}
\newcommand{\ii}{\cap}
\newcommand{\UU}{\bigcup}
\newcommand{\ci}{\subseteq}
\newcommand{\sci}{\subset}
\newcommand{\set}[1]{\{#1\}}
\newcommand{\ga}{\alpha}
\newcommand{\gk}{\kappa}
\newcommand{\tit}{\textit}
\newcommand{\D}[1]{\mathbb{#1}}
\newcommand{\te}{\text}
\newcommand{\pa}{\partial}
\newcommand{\tri}{\triangle}
\begin{document}

To appear, Houston Journal of Mathematics. 
\title{Constrained quantization for a uniform distribution}

\address{School of Mathematical and Statistical Sciences\\
The University of Texas Rio Grande Valley\\
1201 West University Drive\\
Edinburg, TX 78539-2999, USA.}

\email{\{$^1$pigar.biteng01, $^2$mathieu.caguiat01, $^3$dipok.deb01, $^4$mrinal.roychowdhury\}@utrgv.edu}
\email{$^5$betyvelav@gmail.com}
\author{$^1$Pigar Biteng}
\author{$^2$Mathieu Caguiat}
\author{$^3$Dipok Deb}
\author{$^4$Mrinal Kanti Roychowdhury}
 \author{$^5$Beatriz Vela Villanueva}


\subjclass[2010]{60Exx, 94A34.}
\keywords{Probability measure, constrained quantization error, optimal sets of $n$-points, constrained quantization dimension, constrained quantization coefficient}

\date{}
\maketitle

\pagestyle{myheadings}\markboth{P. Biteng, M. Caguiat, D. Deb,  M.K. Roychowdhury, and B. Villanueva}{Constrained quantization for a uniform distribution}

\begin{abstract}
Constrained quantization for a Borel probability measure refers to the idea of estimating a given probability by a discrete probability with a finite number of supporting points lying on a specific set. The specific set is known as the constraint of the constrained quantization. A quantization without a constraint is known as an unconstrained quantization, which traditionally in the literature is known as quantization. Constrained quantization has recently been introduced by Pandey and Roychowdhury. In this paper, for a uniform distribution with support lying on a side of an equilateral triangle, and the constraint as the union of the other two sides, we obtain the optimal sets of $n$-points and the $n$th constrained quantization errors for all positive integers $n$. We also calculate the constrained quantization dimension and the constrained quantization coefficient. 
\end{abstract}

\section{Introduction}
Let $P$ be a Borel probability measure on $\D R^2$ equipped with a Euclidean metric $d$ induced by the Euclidean norm $\|\cdot\|$. For a given closed subset $S$ of $\D R^2$ the distortion error for $P$ with respect to a set $\ga \ci S$, denoted by $V(P; \ga)$, is defined as
\begin{equation*} \label{Vr}  V(P; \ga)= \int \mathop{\min}\limits_{a\in\ga} d(x, a)^2 dP(x).
\end{equation*}
Then, for $n\in \mathbb{N}$, the \tit {$n$th constrained quantization
error} for $P$ is defined as
\begin{equation} \label{Vr} V_{n}:=V_{n}(P)=\inf \Big\{V(P; \ga) : \ga \ci S, 1\leq  \text{card}(\ga) \leq n \Big\},\end{equation}
where $\te{card}(A)$ represents the cardinality of a set $A$. The set $S$ is known as the \tit{constraint} of the constrained quantization error. If $S=R^2$, i.e., if there is no constraint, then the definition of constrained quantization error reduces to the definition of unconstrained quantization error, which traditionally in the literature is known as quantization.    
For more details about unconstrained quantization error, one can see \cite{DFG, DR, GG, GL, GL1, GL2, GL3, GN, KNZ, P, P1, R1, R2, R3, Z1, Z2}. On the other hand, constrained quantization has recently been introduced in \cite{PR1, PR2} and it has much more broad applications in the areas such as information theory, machine learning and data compression, signal processing and national security.
We assume that $\int d(x, 0)^r dP(x)<\infty$ to make sure that the infimum in \eqref{Vr} exists (see \cite{PR1}).  A set $ \ga \ci S$ for which the infimum in  \eqref{Vr} exists and does not contain more than $n$ elements is called an \tit{optimal set of $n$-points} for $P$. Elements of an optimal set are called \tit{optimal elements}. 
 Let $V_{n}(P)$ be a strictly decreasing sequence, and write $V_{\infty}(P):=\mathop{\lim}\limits_{n\to \infty} V_{n}(P)$. 
Then, the number $D(P)$ defined by  
\begin{align*}
D(P):=\mathop{\lim}\limits_{n\to \infty}  \frac{2\log n}{-\log (V_{n}(P)-V_{\infty}(P))}
\end{align*} 
if it exists, 
 is called the \tit{constrained quantization dimension} of $P$ and is denoted by $D(P)$. The constrained quantization dimension measures the speed at which the specified measure of the constrained quantization error converges as $n$ tends to infinity.
For any $\gk>0$, the number 
\[\lim_n n^{\frac 2 \gk}  (V_{n}(P)-V_{\infty}(P)),\]
if it exists, is called the \tit{$\gk$-dimensional constrained quantization coefficient} for $P$.

Let $\tri OAB$ be an equilateral triangle with vertices $O(0,0)$, $A(2, 0)$, $B(1,\sqrt 3)$. Let $P$ be a Borel probability measure on $\D R$ which has support the closed interval $[0, 2]$. Moreover, $P$ is uniform on its support. Let $S_1$ be the side $OB$ and $S_2$ be the side $AB$. In this paper, we take the constraint as $S:=S_1\uu S_2$. Then, in Section~\ref{sec3} for the uniform distribution $P$ we obtain the optimal sets of $n$-points and the $n$th constrained quantization errors for all positive integers $n$. In Section~\ref{sec4}, we calculate the constrained quantization dimension and the constrained quantization coefficient. As mentioned in Remark~\ref{rem1}, though the constrained quantization dimension of the uniform distribution defined in this paper is one, which is equal to the Euclidean dimension of the space where the support of the probability measure is defined, it is not always true (see \cite{PR1, PR3}). 

\begin{remark}
In this paper, to keep our calculation simple to investigate the constrained quantization, we have considered the equilateral triangle with height $\sqrt 3$, and the probability measure as the uniform distribution on the base of the triangle. People can consider any triangle with any height and any probability measure with support on any side of the triangle. 
\end{remark}

\section{Preliminaries}
With respect to a finite set $\ga \sci \D R^2$, by the \tit{Voronoi region} of an element $a\in \ga$, it is meant the set of all elements in $\D R^2$ which are nearest to $a$ among all the elements in $\ga$, and is denoted by $M(a|\ga)$.  
For any two elements $(a, b)$ and $(c, d)$ in $\D R^2$, we write 
 \[\rho((a, b), (c, d)):=(a-c)^2 +(b-d)^2,\] which gives the squared Euclidean distance between the two elements $(a, b)$ and $(c, d)$.
  Two elements $p$ and $q$ in an optimal set of $n$-points are called \tit{adjacent elements} if they have a common boundary in their own Voronoi regions. Let $e$ be an element on the common boundary of the Voronoi regions of two adjacent elements $p$ and $q$ in an optimal set of $n$-points. Since the common boundary of the Voronoi regions of any two elements is the perpendicular bisector of the line segment joining the elements, we have
\[\rho(p, e)-\rho(q, e)=0. \]
We call such an equation a \tit{canonical equation}. 
Notice that any element $x\in \D R$ can be identified as an element $(x, 0)\in \D R^2$. Thus, the nonnegative real-valued function $\rho$ on $\D R \times \D R^2$ defined by 
\[\rho: \D R \times \D R^2 \to [0, \infty) \te{ such that } \rho(x, (a, b))=(x-a)^2 +b^2,\]
represents the squared Euclidean distance between an element $x\in \D R$ and an element $(a, b)\in \D R^2$. 
 Recall that $P$ is a Borel probability measure on $\D R$ which is uniform on its support the closed interval $[0, 2]$. Hence, the probability density function $f$ for $P$ is given by 
\[f(x)=\left\{\begin{array}{cc}
 \frac 1 2 & \te{ if } 0\leq x\leq 2,\\
 0 & \te{ otherwise}.
\end{array}\right.
\]
Hence, we have $dP(x)=P(dx)=f(x) dx$ for any $x\in \D R$. 
Let the constraint $S$ be the union of the sides of the equilateral triangle except the base, i.e., $S=S_1\uu S_2$, where
\begin{align*}
S_1:&=\set{(x, y)\in \D R^2 : 0\leq x\leq 1 \te{ and } y=\sqrt 3 x} \te{ and } \\
S_2:&=\set{(x, y)\in \D R^2 : 1\leq x\leq 2 \te{ and } y=-\sqrt 3(x-2)}.
\end{align*}

\begin{note} \label{note11}
The perpendicular through a point $(a, a\sqrt 3)$ on the line $y= \sqrt 3x$ intersects the support of $P$ at the point $(4a, 0)$ so that $0\leq 4a\leq 2$, i.e., $0\leq a\leq \frac 12$. The perpendicular through a point $(b, -\sqrt 3(b-2))$ on the line $y=-\sqrt 3(x-2)$ intersects the support of $P$ at the point $(4b-6,0)$ so that $0\leq 4b-6\leq 2$, i.e., $\frac 32\leq b\leq 2$. 
Let $\ga_n$ be an optimal set of $n$-points for some $n\in \D N$. Since $S:=S_1\uu S_2$ is the constraint, we must have 
\[\ga_n=(\ga_n\ii S_1)\uu (\ga_n\ii S_2).\]
For some $\ell, m\in \D N$, let 
$\ga_n\ii S_1=\set{(a_1, a_1\sqrt 3), (a_2, a_2\sqrt 3), \cdots, (a_\ell, a_\ell\sqrt 3)}$, and $\ga_n\ii S_2=\set{(b_m, -(b_m-2)\sqrt 3), (b_{m-1}, -(b_{m-1}-2)\sqrt 3), \cdots, (b_1, -(b_1-2)\sqrt 3)}$. Then, without any loss of generality we can assume that
\begin{equation} \label{D1} 0\leq a_1<a_2<\cdots<a_\ell\leq \frac 12<\frac 32\leq b_m<b_{m-1}<\cdots<b_2<b_1\leq 2.\end{equation} 
Under the assumption \eqref{D1}, the elements $(a_i,a_i\sqrt 3)$ and $(a_{i+1}, a_{i+1}\sqrt 3)$ for $1\leq i\leq \ell-1$;  $(a_\ell,a_\ell\sqrt 3)$ and $(b_m, -(b_m-2)\sqrt 3)$; and $(b_{j+1}, -(b_{j+1}-2)\sqrt 3)$ and $(b_j, -(b_j-2)\sqrt 3)$ for $1\leq j\leq m-1$, are the adjacent elements.  
 \end{note}
 
The following proposition that is well-known in unconstrained quantization is also true in constrained quantization.  
\begin{prop}  \label{prop000} 
Let $P$ be a continuous Borel probability measure. Let $\ga_n$ be an optimal set of $n$-points for $n\in \D N$. Then, 
$(i)$ $P(M(a|\ga_n))>0$, and $(ii)$ $P(\pa M(a|\ga_n))=0$ for $a\in \ga_n$, where $\pa M(a|\ga_n)$ represents the boundary of the Voronoi region $M(a|\ga_n)$.  
\end{prop}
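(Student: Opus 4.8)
The plan is to prove both parts by the standard perturbation argument adapted to the constrained setting, using that $\ga_n\ci S$ is an optimal set and that $P$ is continuous (hence nonatomic) with connected support $[0,2]$. Throughout I will use the identification of $x\in\D R$ with $(x,0)\in\D R^2$ and the function $\rho$ introduced in the Preliminaries, together with Note~\ref{note11}, which tells us that the Voronoi regions $M(a|\ga_n)$ for $a\in\ga_n$ intersect the support $[0,2]$ in finitely many subintervals whose endpoints are the feet of perpendiculars from the bisectors; in particular each $M(a|\ga_n)\ii[0,2]$ is a finite union of intervals and $P$-a.e.\ point of $[0,2]$ lies in the interior of exactly one such region.

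For part $(i)$, I would argue by contradiction. Suppose $P(M(a|\ga_n))=0$ for some $a\in\ga_n$. Then the set $\ga_n\sm\{a\}$ is nonempty (if $n=1$ then $M(a|\ga_1)=\D R^2\supseteq[0,2]$ and $P(M(a|\ga_1))=1>0$, so $n\ge 2$), it is contained in $S$, it has at most $n-1\le n$ elements, and removing $a$ does not change the integrand $\min_{b\in\ga_n}\rho(x,b)$ at any point of $[0,2]$ outside $M(a|\ga_n)$, a $P$-null set. Hence $V(P;\ga_n\sm\{a\})=V(P;\ga_n)=V_n$. But now $\ga_n\sm\{a\}$ has strictly fewer than $n$ points, and since $P$ is continuous with support an interval, one can split one of the remaining Voronoi cells — pick any $b\in\ga_n\sm\{a\}$ with $P(M(b|\ga_n\sm\{a\})\ii[0,2])>0$, which exists since the total mass is $1$, and note that $M(b|\ga_n\sm\{a\})\ii[0,2]$ contains a subinterval of positive length; moving along $S$ one can place a second point near $b$ inside that subinterval's Voronoi geometry so as to strictly decrease the distortion on a set of positive measure, contradicting $V_n=V_{n-1}$ being forced. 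Concretely, since $\ga_n$ is optimal with exactly $V_n$ and we have produced an $(n-1)$-point set achieving $V_n$, optimality at level $n$ would give $V_n\le V_{n-1}$, while monotonicity gives $V_{n-1}\ge V_n$; the strict-decrease hypothesis $V_{n-1}>V_n$ recorded before the definition of $D(P)$ then yields the contradiction, provided we know $V_{n-1}>V_n$. To avoid invoking that as a hypothesis, the cleaner route is the direct one: exhibit an $n$-point set beating $V_n$ by taking the $(n-1)$-point set $\ga_n\sm\{a\}$ and adding one well-chosen point on $S$ inside a positive-mass cell, splitting it and strictly lowering $V(P;\cdot)$; this contradicts the optimality of $\ga_n$ directly.

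For part $(ii)$, fix $a\in\ga_n$ and consider $\pa M(a|\ga_n)$. The boundary consists of pieces of perpendicular bisectors between $a$ and other points of $\ga_n$, i.e., finitely many line segments in $\D R^2$. Its intersection with the support $[0,2]$ (viewed inside $\D R^2$) is therefore a finite set of points — each bisector line meets the line $y=0$ in at most one point, by Note~\ref{note11} these are the feet $4a_i$, $4b_j-6$, etc. Since $P$ is continuous (nonatomic), $P$ of any finite subset of $[0,2]$ is $0$, so $P(\pa M(a|\ga_n))=0$. This part is essentially immediate once one observes the boundary is a finite union of lines and $P$ lives on a line meeting each transversally in a single point.

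The main obstacle is part $(i)$: one must make precise the "split a positive-mass cell to strictly decrease distortion" step in the constrained setting, where new points are forced to lie on $S=S_1\uu S_2$ rather than anywhere in $\D R^2$. The key point to check is that the map sending a point $t\in S$ to its contribution $\int_{[0,2]}\min(\dots)\,dP$ is continuous and that within any subinterval $I\subseteq[0,2]$ of positive length lying in a single Voronoi cell $M(b|\ga_n)$, the feet-of-perpendicular correspondence from Note~\ref{note11} lets us choose a point $c\in S$ whose perpendicular foot lies strictly inside $I$; then on the sub-subinterval nearer to $c$ than to $b$ we strictly reduce the pointwise squared distance (since $c\ne b$ and $\rho(\cdot,c)<\rho(\cdot,b)$ there on a set of positive $P$-measure), giving $V(P;(\ga_n\sm\{a\})\uu\{c\})<V_n$, the desired contradiction. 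Everything else is bookkeeping with the density $f\equiv\frac12$ on $[0,2]$.
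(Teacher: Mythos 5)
The paper itself offers no proof of Proposition~\ref{prop000}: it is stated as a fact ``well-known in unconstrained quantization'' that carries over to the constrained setting, so there is no argument of the authors' to compare yours against. What you have written is essentially the standard Graf--Luschgy surgery argument transplanted to the constraint $S=S_1\uu S_2$, and its overall architecture is sound; two places need tightening before it is complete. First, in part $(i)$, the decisive inequality is the one you leave implicit: for $x\in(0,2)$ and $b=(a,a\sqrt 3)\in S_1$ one computes $\rho(x,b)-\tfrac34x^2=\left(\tfrac x2-2a\right)^2$, and $\tfrac34x_0^2$ is exactly $\rho\bigl(x_0,(x_0/4,\,x_0\sqrt3/4)\bigr)$; hence for every $x_0$ in your interval $I$ except possibly the single point $x_0=4a$, the foot-of-perpendicular point $c=(x_0/4,x_0\sqrt3/4)$ (or its mirror on $S_2$ when $b\in S_2$, or when that gives the smaller value) satisfies $\rho(x_0,c)<\rho(x_0,b)$ strictly, the strict inequality persists on a subinterval of $I$ by continuity, and $c\notin\ga_n\sm\set{a}$ automatically because $x_0$ lies in the Voronoi region of $b$. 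Saying only ``since $c\ne b$'' is not enough --- $c\ne b$ does not by itself force $\rho(x_0,c)<\rho(x_0,b)$; you need $c$ to minimize $\rho(x_0,\cdot)$ over $S$ while $b$ does not. Second, in part $(ii)$, the assertion is false for a general continuous Borel measure on $\D R^2$ (a nonatomic measure can charge a line segment), so your word ``transversally'' is carrying real weight: you must observe that every point of $S$ lies in the closed upper half-plane, so the perpendicular bisector of two distinct elements of $\ga_n$ can never be the $x$-axis (that would force the two points to be reflections of one another across it), whence each bisector meets $[0,2]\times\set{0}$ in at most one point and the boundary has $P$-measure zero by nonatomicity. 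With those two points made explicit, your argument is correct and supplies a justification the paper omits.
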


\begin{remark}  
Let $\ga_n$ be an optimal set of $n$-points for some $n\in \D N$. Let $\ga_n\ii S_1$ and $\ga_n\ii S_2$ be the sets as defined in Note~\ref{note11}. Then, there is no common boundary between the Voronoi regions of any two elements in $\ga_n$ unless the two elements are adjacent elements.   
\end{remark}

In the following sections, we give the main results of the paper.

\section{Optimal sets of $n$-points for all $n\geq 1$} \label{sec3} 
In this section, we determine the optimal sets of $n$-points and the $n$th constrained quantization errors for all $n\in \D N$ under the condition that the optimal elements lie on the set $S$.

\begin{prop}
An optimal set of one-point is given by $\set{(\frac 14, \frac{\sqrt{3}}{4})}$ or $\set{(\frac{7}{4},\frac{\sqrt{3}}{4})}$ with constrained quantization error $\frac {13}{12}$.
\end{prop}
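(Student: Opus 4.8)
The plan is to find the single point $a \in S = S_1 \cup S_2$ minimizing the distortion $V(P;\{a\})= \int_0^2 \rho(x,a)\,\tfrac12\,dx$. By the symmetry of the configuration (the reflection $x \mapsto 2-x$ swaps $S_1$ and $S_2$ and preserves $P$), it suffices to carry out the optimization for $a \in S_1$, i.e.\ for $a = (t, t\sqrt 3)$ with $0 \le t \le 1$, and then transport the answer to $S_2$ by symmetry. So first I would write, for $a=(t,t\sqrt3)$,
\begin{equation*}
V(P;\{a\}) = \frac12 \int_0^2 \big((x-t)^2 + 3t^2\big)\,dx,
\end{equation*}
expand the integrand, and evaluate the elementary integral to obtain a quadratic function $g(t)$ in $t$ on $[0,1]$.

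Next I would minimize $g$ over the interval $[0,1]$. Differentiating $g$ and setting $g'(t)=0$ gives a single critical point; I expect it to be $t = \tfrac14$, which lies in the interior $(0,1)$, so it is the global minimum on $[0,1]$ (the leading coefficient of the quadratic is positive). Plugging $t=\tfrac14$ back into $g$ should yield the value $\tfrac{13}{12}$. This gives the candidate optimal point $(\tfrac14, \tfrac{\sqrt3}{4})$ on $S_1$ with distortion $\tfrac{13}{12}$.

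Then, invoking the reflection symmetry $x \mapsto 2-x$, the point $(t,t\sqrt3)\in S_1$ corresponds to the point on $S_2$ with first coordinate $2-t$, namely $(2-t, -\sqrt3((2-t)-2)\sqrt{\phantom{3}}\!)=(2-t, t\sqrt3)$; hence the minimizer on $S_2$ is $(\tfrac74, \tfrac{\sqrt3}{4})$, also with distortion $\tfrac{13}{12}$. Since every one-point set $\{a\}$ with $a\in S$ has $a$ on $S_1$ or on $S_2$, the infimum defining $V_1(P)$ equals $\tfrac{13}{12}$, attained exactly at these two points, which establishes the claim. I would also remark (using Proposition~\ref{prop000}) that any optimal one-point set is genuinely a singleton, so these two sets are precisely the optimal sets of one point.

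The computation here is entirely routine — a single quadratic minimization — so there is no real obstacle; the only point requiring a word of care is checking that the critical point $t=\tfrac14$ falls inside $[0,1]$ rather than at an endpoint, and correctly translating the $S_1$ minimizer to the $S_2$ minimizer via the symmetry of the parametrization of $S_2$ (where the first coordinate runs over $[1,2]$ with $y=-\sqrt3(x-2)$).
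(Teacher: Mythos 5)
Your proposal is correct and follows essentially the same route as the paper: parametrize a one-point set on $S_1$, reduce the distortion to a quadratic in the parameter, and minimize to get $t=\tfrac14$ with value $\tfrac{13}{12}$. The only difference is that you dispatch the $S_2$ case by the reflection symmetry $x\mapsto 2-x$, whereas the paper repeats the explicit quadratic minimization on $S_2$; both are valid and the computations agree.
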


\begin{proof} Let $\ga:=\set{(a, b)}$ be an optimal set of one-point. The following two cases can happen:

\tit{Case~1. $(a, b)\in S_1$.}

In this case, as $b= a \sqrt 3$, the distortion error is given by
\[V(P; \ga)=\int \rho(x, (a, a\sqrt 3)) \, dP(x)=\frac 12 \int_0^2\rho(x, (a, a\sqrt 3)) \,dx=4 a^2-2 a+\frac{4}{3},\]
the minimum value of which is $\frac{13}{12}$, and it occurs when $a=\frac 14$.

\tit{Case~2. $(a, b)\in S_2$.}

In this case, $b=-(a-2)\sqrt 3$. Then, the distortion error is given by
\[V(P; \ga)=\int \rho(x, (a, -(a-2)\sqrt 3)) \, dP(x)=\frac 12 \int_0^2\rho(x, (a,  (a, -(a-2)\sqrt 3))) \,dx=4 a^2-14 a+\frac{40}{3},\]
the minimum value of which is $\frac{13}{12}$, and it occurs when $a=\frac 74$.

Hence, an optimal set of one-point is given by either $\set{(\frac 14, \frac{\sqrt{3}}{4})}$ or $\set{(\frac{7}{4},\frac{\sqrt{3}}{4})}$ with constrained quantization error $\frac {13}{12}$, which is the proposition (see Figure~\ref{Fig1}).
\end{proof}

\begin{prop} \label{propD1} 
The optimal set of two-points is $\set{(\frac 18, \frac 18\sqrt 3), (\frac {15}8, \frac 18\sqrt 3)}$ with constrained quantization error $V_2=\frac{13}{48}$
\end{prop}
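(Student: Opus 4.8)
The plan is to set up the optimal two-point configuration using the structure forced by Note~\ref{note11}, namely that an optimal set $\ga_2$ splits as $(\ga_2 \ii S_1) \uu (\ga_2 \ii S_2)$. First I would argue that one point must lie on $S_1$ and the other on $S_2$: by Proposition~\ref{prop000}(i), each Voronoi region has positive $P$-measure, and by the symmetry of the support $[0,2]$ and the constraint $S = S_1 \uu S_2$ about the line $x = 1$, a configuration with both points on $S_1$ (or both on $S_2$) can be beaten, or at least equalled and then improved, by moving one point across. More directly, I would just compute the distortion for a general two-point set and minimize. Write the two points as $(a, a\sqrt 3) \in S_1$ with $0 \le a \le \frac12$ and $(b, -(b-2)\sqrt 3) \in S_2$ with $\frac32 \le b \le 2$ (the case of both points on one side is handled by a short separate estimate, or subsumed by noting the minimum over that restricted family is larger than $\frac{13}{48}$).

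Next I would locate the boundary between the two Voronoi regions on the support. Using the function $\rho(x,(p,q)) = (x-p)^2 + q^2$ from the Preliminaries, the canonical equation $\rho(x,(a,a\sqrt 3)) = \rho(x,(b,-(b-2)\sqrt 3))$ determines the point $x_0 \in [0,2]$ where the support is divided; by the symmetry I expect $x_0 = 1$ at the optimum, but in general $x_0$ is an explicit affine expression in $a$ and $b$. Then the distortion is
\begin{equation*}
V(P;\ga_2) = \frac12 \int_0^{x_0} \rho(x,(a,a\sqrt 3))\,dx + \frac12 \int_{x_0}^{2} \rho(x,(b,-(b-2)\sqrt 3))\,dx,
\end{equation*}
a piecewise-polynomial function of $(a,b,x_0)$. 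I would substitute the expression for $x_0$, obtain an explicit function of $a$ and $b$ alone, and minimize by setting the partial derivatives to zero. The symmetry $b \leftrightarrow 2-a$ reduces this to essentially a one-variable optimization, yielding $a = \frac18$ and correspondingly $b = \frac{15}{8}$, with the two points sharing the height $\frac18\sqrt 3$ (consistent with Note~\ref{note11}, since $4a = \frac12 \le 2$ and $4b - 6 = \frac32 \le 2$, so both perpendicular feet land inside the support). Plugging back in gives $V_2 = \frac{13}{48}$.

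The main obstacle is not the calculus but the case analysis needed to be rigorous: one must rule out that an optimal two-point set places both points on the same side $S_i$, and one must check the boundary possibilities $a = \frac12$ or $b = \frac32$ (where a Voronoi foot would sit exactly at the triangle's apex projection) as well as confirm the interior critical point is a genuine minimum rather than a saddle. I would handle the "same side" case by computing the minimal distortion of the best two-point subset of $S_1$ alone — this is a one-dimensional quantization of the uniform measure by two points constrained to a line, whose minimum is readily seen to exceed $\frac{13}{48}$ — and the boundary cases by direct evaluation, showing each gives a strictly larger value. The interior minimality then follows from the second-derivative test on the reduced one-variable function, completing the proof.
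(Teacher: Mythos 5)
Your proposal is correct and follows essentially the same route as the paper: the paper also splits into the cases (both points on $S_1$, both on $S_2$, one on each side), uses the canonical equation to locate the Voronoi boundary point $d$ on the support, minimizes the resulting polynomial distortion in $(a_1,b_1)$ to get $a_1=\frac18$, $b_1=\frac{15}{8}$, $V_2=\frac{13}{48}$, and rules out the same-side configurations by computing their minimal distortion $\frac{49}{48}>\frac{13}{48}$. The extra care you flag about boundary values and verifying the critical point is a genuine minimum is a reasonable tightening but does not change the argument.
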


\begin{proof}
Let $\ga$ be an optimal set of two-points. The following cases can arise:

\tit{Case~1: Both the elements are on $S_1$.}

In this case, we can assume that the two elements are $(a_1, a_1\sqrt 3)$ and $(a_2, a_2\sqrt 3)$. Since the probability measures of the Voronoi regions are positive, we have $0\leq a_1<a_2\leq \frac 12$.  The perpendicular bisector of the line segment joining the two elements intersects the support of $P$ at the point
$(2(a_1+a_2),0)$. Hence, the distortion error is given by
\begin{align*}
V(P; \ga)&=\int\min_{a\in \ga} \rho(x, a) \,dP(x)=\int_0^{2(a_1+a_2)}\min_{a\in \ga} \rho(x, a) \,dP(x)+\int_{2(a_1+a_2)}^2\min_{a\in \ga} \rho(x, a) \,dP(x)\\
&=\int_0^{2(a_1+a_2)}\rho(x, (a_1, a_1\sqrt 3)) f(x)\,dx+\int_{2(a_1+a_2)}^2\rho((x, 0), (a_2, a_2\sqrt 3)) f(x)\,dx\\
&=2 a_1^3+2 a_2 a_1^2-2 a_2^2 a_1-2 \left(a_2-1\right){}^2 a_2+\frac{4}{3},
\end{align*}
the minimum value of which is $\frac{49}{48}$, and it occurs when $a_1=\frac 18$ and $a_2=\frac 38$.

\tit{Case~2: Both the elements are on $S_2$.}

This case is the reflection of Case~1 with respect to the line $x=1$. 
Hence, if the two elements are $(b_1, -(b_1-2)\sqrt 3)$ and $(b_2, -(b_2-2)\sqrt 3)$, then
the distortion error is $V(P; \ga)=\frac{49}{48}$, and it occurs when $b_1=\frac {15}{8}$ and $b_2=\frac{13}8$.

\tit{Case~3: One element is on $S_1$ and one element is on $S_2$.}

In this case, let the two elements be $(a_1, a_1\sqrt 3)$ and $(b_1, -(b_1-2)\sqrt 3)$. Let the boundary of the Voronoi regions of the two elements $(a_1, a_1\sqrt 3)$ and $(b_1, -(b_1-2)\sqrt 3)$ intersect the support of $P$ at the point $(d,0)$. Then, the canonical equation is 
\[\rho((a_1, a_1\sqrt 3), (d, 0))-\rho((b_1, -(b_1-2)\sqrt 3), (d, 0))=0\]
implying $d=\frac{2 \left(a_1^2-b_1^2+3 b_1-3\right)}{a_1-b_1}$. 
Hence, the distortion error $V(P; \ga)$ is given by
\begin{align*}
V(P; \ga)&=\int\min_{a\in \ga} \rho(x, a) \,dP(x)\\
&=\int_0^{d}  \rho(x, (a_1, a_1\sqrt 3)) \,dP(x)+\int_{d}^2 \rho(x, (b_1, -(b_1-2)\sqrt 3)) \,dP(x)\\
&=\frac{2 \left(-6 a_1^2 \left(b_1^2-3 b_1+3\right)+a_1 \left(6 b_1^2-21 b_1+20\right)+3 a_1^4+3 b_1^4-24 b_1^3+66 b_1^2-74 b_1+27\right)}{3 \left(a_1-b_1\right)},
\end{align*}
the minimum value of which is $\frac{13}{48}$, and it occurs when $a_1=\frac 18$ and $b_1=\frac {15}8$.

Among all the above possible cases, we see that the distortion error is smallest in Case~3. Hence, the optimal set of two-points is $\set{(\frac 18, \frac 18\sqrt 3), (\frac {15}8, \frac 18\sqrt 3)}$ with constrained quantization error $V_2=\frac{13}{48}$ (see Figure~\ref{Fig1}). Thus, the proof of the proposition is complete.
\end{proof}

\begin{prop} \label{propD2}
The set $\ga:=\set{(a_1, a_1\sqrt 3), (a_2, a_2\sqrt 3), (b_1, -(b_1-2)\sqrt 3)}$, where 
\[a_1=\frac{1}{12} \left(26-7 \sqrt{13}\right), a_2=\frac{1}{4} \left(26-7 \sqrt{13}\right), \te{ and } b_1=\frac{73}{12}-\frac{7 \sqrt{13}}{6},\] forms an optimal set of three-points with quantization error $V_3=\frac{1}{108} (-637) \left(28 \sqrt{13}-101\right)$. 
\end{prop}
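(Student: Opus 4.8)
The plan is to follow exactly the case analysis used in the proofs of the one- and two-point propositions, but now with three constrained points distributed between $S_1$ and $S_2$. By Note~\ref{note11}, an optimal set $\ga_3$ splits as $(\ga_3\ii S_1)\uu(\ga_3\ii S_2)$ with $\ell+m=3$, so the possible configurations are $(\ell,m)\in\{(3,0),(0,3),(2,1),(1,2)\}$; by the reflection symmetry about the line $x=1$, cases $(0,3)$ and $(1,2)$ give the same minimal distortion as $(3,0)$ and $(2,1)$ respectively, so it suffices to analyze $(3,0)$ and $(2,1)$, then compare.

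For the $(3,0)$ case, write the three points as $(a_i,a_i\sqrt3)$, $i=1,2,3$, with $0\le a_1<a_2<a_3\le\frac12$. Using Note~\ref{note11}, the perpendicular bisector of consecutive points $(a_i,a_i\sqrt3)$, $(a_{i+1},a_{i+1}\sqrt3)$ meets the base at $(2(a_i+a_{i+1}),0)$ (the midpoint projection, which is where $\rho(x,(a_i,a_i\sqrt3))=\rho(x,(a_{i+1},a_{i+1}\sqrt3))$ — this generalizes the two-point computation). One then writes $V(P;\ga)=\frac12\int_0^{2(a_1+a_2)}\rho(x,(a_1,a_1\sqrt3))\,dx+\frac12\int_{2(a_1+a_2)}^{2(a_2+a_3)}\rho(x,(a_2,a_2\sqrt3))\,dx+\frac12\int_{2(a_2+a_3)}^{2}\rho(x,(a_3,a_3\sqrt3))\,dx$, a polynomial in $a_1,a_2,a_3$, and minimizes by setting the three partials to zero. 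I expect the minimum here to be of the same order of magnitude as $V_2=\frac{13}{48}$ but strictly larger than the claimed $V_3$, so this case is eliminated.

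For the $(2,1)$ case — the one that yields the claimed optimal set — put the points at $(a_1,a_1\sqrt3)$, $(a_2,a_2\sqrt3)$ on $S_1$ with $0\le a_1<a_2\le\frac12$, and $(b_1,-(b_1-2)\sqrt3)$ on $S_2$ with $\frac32\le b_1\le2$. The adjacencies are $(a_1,\cdot)$–$(a_2,\cdot)$ and $(a_2,\cdot)$–$(b_1,\cdot)$. The first boundary meets the base at $(2(a_1+a_2),0)$; the second boundary meets the base at the point $d$ determined by the canonical equation $\rho((a_2,a_2\sqrt3),(d,0))=\rho((b_1,-(b_1-2)\sqrt3),(d,0))$, giving $d=\dfrac{2(a_2^2-b_1^2+3b_1-3)}{a_2-b_1}$ exactly as in Case~3 of Proposition~\ref{propD1} (with $a_1$ there replaced by $a_2$). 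Then $V(P;\ga)=\frac12\int_0^{2(a_1+a_2)}\rho(x,(a_1,a_1\sqrt3))\,dx+\frac12\int_{2(a_1+a_2)}^{d}\rho(x,(a_2,a_2\sqrt3))\,dx+\frac12\int_{d}^{2}\rho(x,(b_1,-(b_1-2)\sqrt3))\,dx$ is a rational function of $a_1,a_2,b_1$, and the critical-point equations $\partial V/\partial a_1=\partial V/\partial a_2=\partial V/\partial b_1=0$ are solved to obtain $a_1=\frac1{12}(26-7\sqrt{13})$, $a_2=\frac14(26-7\sqrt{13})$, $b_1=\frac{73}{12}-\frac{7\sqrt{13}}{6}$, with value $V_3=\frac{1}{108}(-637)(28\sqrt{13}-101)$. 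One checks these satisfy the constraints $0\le a_1<a_2\le\frac12$ and $\frac32\le b_1\le2$ (since $7\sqrt{13}\approx25.24$, we get $a_1\approx0.063$, $a_2\approx0.19$, $b_1\approx1.92$), and that this value is smaller than the $(3,0)$ value, so among all four configurations the minimum is attained here.

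The main obstacle is purely the algebra of the $(2,1)$ case: the distortion is a genuinely rational (not polynomial) function of three variables because of the $d = 2(a_2^2-b_1^2+3b_1-3)/(a_2-b_1)$ substitution, so the system $\nabla V=0$ is a nonlinear system whose solution involves $\sqrt{13}$; verifying that the stated triple is indeed the global minimizer on the relevant box (rather than merely a critical point) requires either a second-derivative/convexity check or an argument that the boundary of the parameter box gives no smaller value. A secondary point requiring a word of justification is why it suffices to consider only the "generic" adjacency structure of Note~\ref{note11} — i.e., that no optimal configuration can have a point whose Voronoi region misses the base, which is ruled out by Proposition~\ref{prop000}$(i)$. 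Once those are in hand, comparing the finitely many case-values and invoking the $x=1$ reflection symmetry finishes the proof.
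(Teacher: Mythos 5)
Your proposal follows essentially the same route as the paper: the same case split into all-three-on-one-side versus two-on-one-side-and-one-on-the-other (with the reflection about $x=1$ disposing of the mirror cases), the same Voronoi boundary points $2(a_i+a_{i+1})$ and the same canonical-equation value of $d$, and the same critical-point computation followed by a comparison of the two case minima. The only discrepancies are cosmetic: the $(3,0)$ minimum actually comes out to $\tfrac{109}{108}\approx 1.009$ rather than being near $V_2=\tfrac{13}{48}$, and $b_1\approx 1.877$ rather than $1.92$, neither of which affects the argument.
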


\begin{proof}
Let $\ga$ be an optimal set of three-points. The following cases can happen: 

\tit{Case~1: All the three elements lie on one of the sides $S_1$ and $S_2$. }

Without any loss of generality, we can assume that all the elements lie on the side $S_1$. Then,
\[\ga=\set{(a_1, a_1\sqrt 3 ), (a_2,   a_2\sqrt 3 ), (a_3,   a_3\sqrt 3 )}\]
such that by Note~\ref{note11}, we have  $0\leq a_1<a_2<a_3\leq \frac 12$. Notice that the boundary line of the Voronoi regions of the elements $(a_1, \sqrt 3 a_1)$ and $(a_2, \sqrt 3 a_2)$ intersects the support of $P$ at the point $(2(a_1+a_2), 0)$; the boundary line of the Voronoi regions of the elements $(a_2, a_2\sqrt 3 )$ and $(a_3, a_3\sqrt 3 )$ intersects the support of $P$ at the point $(2(a_2+a_3), 0)$. Hence, if $V(P; \ga)$ is the distortion error, then
\begin{align*}
V(P; \ga)&=\int\min_{a\in \ga}\rho(x, a)\,dP=\int_0^{2(a_1+a_2)} \rho(x, (a_1, a_1\sqrt 3))\,dP+\int_{2(a_1+a_2)}^{2(a_2+a_3)} \rho(x, (a_2, a_2\sqrt 3))\,dP\\
&\qquad \qquad +\int_{2(a_2+a_3)}^{2} \rho(x, (a_3, a_3\sqrt 3))\,dP\\
&=2 a_1^3+2 a_2 a_1^2-2 a_2^2 a_1-2 a_3^3-2 \left(a_2-2\right) a_3^2+2 \left(a_2^2-1\right) a_3+\frac{4}{3},
\end{align*}
the minimum value of which is $\frac{109}{108}$ and it occurs when $a_1=\frac{1}{12}, \, a_2= \frac{1}{4}, \, a_3= \frac{5}{12}$.

\tit{Case~2: Two elements are on one side and one element is on another side. }

Without any loss of generality, we can assume that two elements lie on $S_1$ and one element lies on $S_2$. Then, 
\[\ga=\set{(a_1,  a_1\sqrt 3), (a_2,  a_2\sqrt 3 ), (b_1,  -(b_1-2)\sqrt 3)},\]
 where $0<a_1<a_2<1<b_1<2$. 
Let the boundaries of the Voronoi regions of $(a_1, a_1\sqrt 3 )$ and $(a_2,  a_2\sqrt 3)$
intersect the support of $P$ at the point $(2(a_1+a_2), 0)$. Let the boundary of the Voronoi regions of $(a_2,  a_2\sqrt 3)$ and $(b_1,  -(b_1-2)\sqrt 3)$ intersect the support of $P$ at the points $(d, 0)$. Then, solving the canonical equation
\[\rho((a_2, a_2\sqrt 3), (d, 0))- \rho((b_1, -(b_1-2)\sqrt 3), (d, 0))=0,\]
we have 
\[d=\frac{2 \left(a_2^2-b_1^2+3 b_1-3\right)}{a_2-b_1}.\]
Then, the distortion error is given by
\begin{align*}
V(P; \ga)&=\int\min_{a\in \ga}\rho(x, a)\,dP\\
&=\int_0^{2(a_1+a_2)} \rho(x, (a_1, \sqrt 3a_1))\,dP+\int_{2(a_1+a_2)}^{d} \rho(x, (a_2, \sqrt 3a_2))\,dP\\
&\qquad \qquad +\int_{d}^{2} \rho(x, (b_1, -(b_1-2)\sqrt 3))\,dP.
\end{align*}
The distortion error is minimum if 
\[\frac{\pa}{\pa a_i}(V(P; \ga))=0 \te{ for  } i=1, 2, \te{ and } \frac{\pa}{\pa b_1}(V(P; \ga))=0.\]
Upon simplification, we see that $V(P; \ga)$ is minimum if $a_1=\frac{1}{12} \left(26-7 \sqrt{13}\right)$, $a_2=\frac{1}{4} \left(26-7 \sqrt{13}\right)$, and $b_1=\frac{73}{12}-\frac{7 \sqrt{13}}{6}$, and then $V(P; \ga)=\frac{1}{108} (-637) \left(28 \sqrt{13}-101\right)$. 

Considering Case~1 and Case~2, we see that the distortion error is minimum in Case~2, i.e., the set $\ga:=\set{(a_1, a_1\sqrt 3), (a_2, a_2\sqrt 3), (b_1, -(b_1-2)\sqrt 3)}$ in Case~2 forms an optimal set of three-points with quantization error $V_3=\frac{1}{108} (-637) \left(28 \sqrt{13}-101\right)$, which is the proposition (see Figure~\ref{Fig1}). 
\end{proof}

\begin{remark} \label{remD1} 
Proceeding in the similar way as Proposition~\ref{propD2}, we can show that the optimal set of four-points is given by
\[\ga_4=\Big\{(a_i, a_i\sqrt 3) : 1\leq i\leq 2\Big\}\UU \Big\{(b_j, -(b_j-2)\sqrt 3) :  1\leq j\leq 2\Big\},\]
where $a_1=\frac 1{16}, \, a_2=\frac{3}{16}, \, b_1=\frac{31}{16}$, and $b_2=\frac {29}{16}$ with constrained quantization error $V_4=\frac{49}{192}$. An optimal set of five-points is given by \[\ga_5=\Big\{(a_i, a_i\sqrt 3) : 1\leq i\leq 3\Big\}\UU \Big\{(b_j, -(b_j-2)\sqrt 3) :  1\leq j\leq 2\Big\},\]
where $a_i=\frac {(2i-1)u}{2}$ and $b_j=2-\frac {(2j-1)v}{2}$ for $1=1, 2, 3$ and $j=1, 2$, with 
\[u=\frac{7}{10} \left(21-2 \sqrt{109}\right) \te{ and } v=\frac{1}{10} \left(21 \sqrt{109}-218\right).\] The corresponding constrained quantization error is given by 
\[V_5=\frac{1}{300} (-5341) \left(84 \sqrt{109}-877\right).\]
To know the pictorial representation, one can see Figure~\ref{Fig1}.
\end{remark}

\begin{lemma}\label{lemmaD1}
For $n\geq 2$, let $\ga_n$ be an optimal set of $n$-points for $P$. Then, $\ga_n$ contains points from both $S_1$ and $S_2$. 
\end{lemma}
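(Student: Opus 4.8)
The plan is to argue by contradiction: suppose some optimal set $\ga_n$ with $n\geq 2$ lies entirely on one side, say $\ga_n\ci S_1$, and write $\ga_n=\set{(a_1,a_1\sqrt3),\dots,(a_n,a_n\sqrt3)}$ with $0\leq a_1<\cdots<a_n\leq\frac12$ as in Note~\ref{note11}. The key structural fact, already used in Propositions~\ref{propD1} and~\ref{propD2}, is that for consecutive points on $S_1$ the Voronoi boundary on the support meets it at $(2(a_i+a_{i+1}),0)$, so the distortion is a sum of integrals of $x\mapsto\rho(x,(a_i,a_i\sqrt3))=(x-a_i)^2+3a_i^2$ over consecutive subintervals of $[0,2]$. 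The first step is to write down this distortion $V(P;\ga_n)$ explicitly and note that, by Proposition~\ref{prop000}, an optimal configuration must satisfy $\pa V/\pa a_i=0$ for each $i$ (interior critical point) or else have $a_1=0$ or $a_n=\frac12$ on the boundary of the feasible box. Because each integrand carries the extra $+3a_i^2$ penalty term coming from the height $\sqrt3 a_i$ of the point above the support, the optimal $a_i$'s are pushed well below the naive one-dimensional optimal centers; in particular I expect the stationarity conditions to force $a_n<\frac12$ strictly, so the right endpoint constraint is not active.

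The heart of the argument is then a comparison/exchange step. Starting from the hypothetical one-sided optimum $\ga_n\ci S_1$, I would construct a competitor $\ga_n'$ by moving the topmost point $(a_n,a_n\sqrt3)$ — which is responsible for the rightmost chunk of mass, roughly the interval $(2(a_{n-1}+a_n),2)$ — to the mirror-image point $(2-a_n,(2-a_n)... )$ wait, rather to a suitably chosen point on $S_2$ of the form $(b,-(b-2)\sqrt3)$ with $b$ near $2$. The point is that the rightmost portion of $[0,2]$ is "closer" (in the squared-distance sense that matters, accounting for height) to points on $S_2$ than to points high up on $S_1$: for $x$ near $2$, $\rho(x,(a_n,a_n\sqrt3))=(x-a_n)^2+3a_n^2$ while a point on $S_2$ near $(2,0)$ has $\rho$ close to $(x-2)^2$. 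Concretely, comparing the one-point optima ($V_1=\frac{13}{12}$ attained symmetrically at $(\frac14,\frac{\sqrt3}{4})$ or $(\frac74,\frac{\sqrt3}{4})$) already shows $S_1$ and $S_2$ are symmetric, and Proposition~\ref{propD1} shows that for $n=2$ the mixed configuration strictly beats both one-sided ones. I would leverage exactly this: given an optimal $n$-set on $S_1$, split the index set and show that relocating the block of points serving the right half of $[0,2]$ onto $S_2$ strictly decreases $V$, using the $n=2$ and $n=3$ computations as the base template and a scaling/self-similarity observation (the distortion restricted to a subinterval with a prescribed number of points has the same functional form as a smaller instance of the whole problem).

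The main obstacle will be making the exchange step rigorous without simply redoing an $n$-variable optimization: I need a clean monotonicity lemma saying that if $k$ points on $S_1$ optimally cover a subinterval $[c,2]$ of the support, then replacing them by $k$ suitably placed points, some or all on $S_2$, strictly lowers the cost whenever $2-c$ is not too small — and I must check the sole genuinely dangerous case $k=1$ with the covered interval extremely short (where the height penalty $3a_n^2$ is itself tiny). For that borderline case I would instead perturb two points at once, or invoke that $a_n$ being forced small by stationarity already bounds the relevant interval length from below, so the penalty term cannot be neglected. Once the strict decrease is established, it contradicts optimality of the one-sided $\ga_n$, and by the $S_1\leftrightarrow S_2$ reflection symmetry the same rules out $\ga_n\ci S_2$; hence $\ga_n$ must meet both $S_1$ and $S_2$, which is the claim.
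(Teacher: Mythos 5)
Your overall strategy (argue by contradiction from a hypothetical one-sided optimum) is the same as the paper's, and your opening step --- writing the one-sided distortion as a sum of integrals of $(x-a_i)^2+3a_i^2$ over consecutive Voronoi intervals and imposing stationarity --- is exactly how the paper begins. But the heart of your argument, the exchange/monotonicity lemma (``relocating the block of points serving the right half of $[0,2]$ onto $S_2$ strictly decreases $V$''), is never actually established: you state it as something you \emph{would need}, flag a ``genuinely dangerous case'' $k=1$, and propose to handle it either by perturbing two points at once or by an unproven lower bound on the interval length. As written this is a plan, not a proof. Moreover, the scaling/self-similarity observation you invoke is suspect: the height penalty $3a_i^2$ depends on the absolute position of $a_i$ on $S_1$, not on the length of the subinterval that point serves, so the restricted problem on a subinterval is \emph{not} a rescaled copy of the whole problem, and the $n=2,3$ computations cannot be transplanted by scaling alone.

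The gap is avoidable, and the paper's route shows how: after the stationarity step one finds $a_i=\frac{(2i-1)u}{2}$, substitutes back, minimizes over the single remaining parameter $u$ (obtaining $u=\frac{1}{2n}$), and computes that the best possible all-on-$S_1$ distortion is exactly $1+\frac{1}{12n^2}\geq 1$. Since $V_n\leq V_3<1$ by Proposition~\ref{propD2} and the monotonicity of $(V_n)$, the contradiction is immediate --- no exchange argument is needed. (An even softer version of the same point: for $x\in[0,2]$ one has $\min_{a\in S_1}\rho(x,a)=\frac{3x^2}{4}$, attained at $a=x/4$, so \emph{any} set contained in $S_1$, no matter how many points it has, incurs distortion at least $\frac 12\int_0^2\frac{3x^2}{4}\,dx=1>V_2=\frac{13}{48}$.) Until you either prove your monotonicity lemma in full, including the borderline case you yourself identified, or replace it by such a direct computation or lower bound, the proof is incomplete.
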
 
\begin{proof}
By Proposition~\ref{propD1}, the lemma is true for $n=2$. Let us now prove the lemma for $n\geq 3$. Let $\ga_n$ be an optimal set of $n$-points for $n\geq 3$ with the corresponding $n$th constrained quantization error $V_n$. For the sake of contradiction, without any loss of generality, we can assume that all the elements in $\ga_n$ lie on the side $S_1$. Then, we can write 
\[\ga_n:=\set{(a_i, a_i\sqrt 3) : 1\leq i\leq n},\]
where $0<a_1<a_2<a_3<\cdots<a_n<1$. In fact, by Note~\ref{note11}, we have  $0\leq a_1<a_2<a_3<\cdots<a_n\leq \frac 12$.
The quantization error $V_n$ is given by 
\begin{align*}
V_n&=\frac 12\Big(\int_0^{2(a_1+a_2)}\rho(x, (a_1, a_1\sqrt 3)) \, dx+\sum_{i=2}^{n-1}\int_{2(a_{i-1}+a_i)}^{2(a_{i}+a_{i+1})}\rho(x, (a_i, a_i\sqrt 3)) \, dx\\
&\qquad   + \int_{2(a_{n-1}+a_n)}^2\rho(x, (a_n, a_n\sqrt 3)) \,dx\Big).
\end{align*}
Since $V_n$ gives the optimal error and is differentiable with respect to $a_i$, we have $\frac{\pa}{\pa a_i} V_n=0$ for $1\leq i\leq n$.
Solving the equations $\frac{\pa}{\pa a_i} V_n=0$ for $1\leq i\leq n-1$, we have 
\[2a_1=a_2-a_1=a_3-a_2=\cdots=a_n-a_{n-1}=u  \te{ implying } a_i=\frac {(2i-1)u}{2} \te{ for } 1\leq i\leq n,\]
 where $u$ is a constant depending on $n$ such that $0<u<1$. 
 Now, putting the values of $a_i$ in term of $u$, we have 
 \begin{align*}
 \int_0^{2(a_1+a_2)}\rho(x, (a_1, a_1\sqrt 3)) \, dx&=\frac{52 u^3}{3},\\
 \sum_{i=2}^{n-1}\int_{2(a_{i-1}+a_i)}^{2(a_{i}+a_{i+1})}\rho(x, (a_i, a_i\sqrt 3)) \, dx&=\frac{4}{3} \Big(12 n^3 u^3-36 n^2 u^3+37 n u^3-26 u^3\Big),\\
 \int_{2(a_{n-1}+a_n)}^2\rho(x, (a_n, a_n\sqrt 3)) \,dx&=-\frac{1}{3} 4 \left(16 n^3-42 n^2+39 n-13\right) u^3\\&
 \hspace{ 0.5 in} +2 (1-2 n)^2 u^2+(2-4 n) u+\frac{8}{3}.
\end{align*}
 Now, substituting all the corresponding values in the expression for $V_n$, and then upon simplification,  we have 
\begin{align*}
V_n&=-\frac{1}{3} 4 n \left(2 n^2-3 n+1\right) u^3+(1-2 n)^2 u^2-2 n u+u+\frac{4}{3},
\end{align*}
the minimum value of which occurs when $u=\frac{1}{2 n}$. 
Hence, putting the values of $u$ in the last expression for $V_n$, we obtain
\[V_n=\frac{1}{12 n^2}+1\geq 1>V_3 \te{ for all } n\geq 3,\]
which leads to a contradiction. This contradiction arises because $V_n$ is a decreasing sequence, and by Proposition~\ref{propD2}, we know $V_3=\frac{1}{108} (-637) \left(28 \sqrt{13}-101\right)<1$. 
Thus, the proof of the lemma is complete. 
\end{proof}

\begin{lemma} \label{lemmaD2}
For $n\geq 4$, let $\ga_n$ be an optimal set of $n$-points for $P$. Then, $\ga_n$ must contain at least two elements from each of $S_1$ and $S_2$. 
\end{lemma}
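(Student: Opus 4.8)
The plan is to argue by contradiction, in parallel with the proof of Lemma~\ref{lemmaD1}. Suppose $\ga_n$ is an optimal set of $n$-points with $n\geq 4$ but containing only one element from one of the two sides, say only one element $(b_1,-(b_1-2)\sqrt 3)$ from $S_2$ and the remaining $n-1$ elements $(a_1,a_1\sqrt 3),\dots,(a_{n-1},a_{n-1}\sqrt 3)$ from $S_1$, with $0\leq a_1<\cdots<a_{n-1}\leq\frac12$ and $\frac32\leq b_1\leq 2$ by Note~\ref{note11}. As in the earlier proofs, the boundary of the Voronoi regions of consecutive $S_1$-points $(a_{i-1},\cdot)$ and $(a_i,\cdot)$ meets the base at $(2(a_{i-1}+a_i),0)$, while the boundary of the Voronoi regions of $(a_{n-1},\cdot)$ and $(b_1,\cdot)$ meets the base at $(d,0)$ with $d=\frac{2(a_{n-1}^2-b_1^2+3b_1-3)}{a_{n-1}-b_1}$ from the canonical equation. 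Writing out $V(P;\ga_n)$ as a sum of integrals over these subintervals and setting $\frac{\pa}{\pa a_i}V=0$ for $1\leq i\leq n-2$ gives, exactly as before, $2a_1=a_2-a_1=\cdots=a_{n-1}-a_{n-2}=u$, i.e.\ $a_i=\frac{(2i-1)u}{2}$; the remaining stationarity conditions $\frac{\pa}{\pa a_{n-1}}V=0$ and $\frac{\pa}{\pa b_1}V=0$ then pin down $u$ and $b_1$ in terms of $n$, and substitution yields a closed form for the minimal $V$ under this configuration, call it $\tl V_n$.

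The key is then to show $\tl V_n>V_n$, contradicting optimality, i.e.\ that the genuinely optimal configuration must do strictly better. I would do this by exhibiting an explicit competitor set with $n$ points, two of which lie on $S_2$: take the five-point-type ansatz from Remark~\ref{remD1} generalized, namely $k$ equally-stepped points on $S_1$ of the form $a_i=\frac{(2i-1)u}{2}$ and $n-k$ equally-stepped points on $S_2$ of the form $b_j=2-\frac{(2j-1)v}{2}$, with the split as balanced as the geometry allows, optimize over $u,v$, and compare. In fact it is cleaner to compare $\tl V_n$ against the error of the specific symmetric-ish competitor with exactly two points on the short side, or simply to note monotonicity: since $V_n\leq V(P;\gb)$ for \emph{any} $\gb\ci S$ with $n$ points, it suffices to produce one such $\gb$ with two points on $S_2$ whose error is smaller than $\tl V_n$. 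A convenient choice is to start from the optimal $\ga_{n-1}$-configuration (which by Lemma~\ref{lemmaD1} already has points on both sides) and split whichever side currently carries more points by adding one more point there; the resulting error is strictly less than $V_{n-1}$, and one checks it beats $\tl V_n$.

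Concretely, the steps in order are: (1) set up the one-sided-plus-one configuration and its distortion integral; (2) solve the stationarity equations for the $S_1$-points to get the arithmetic-progression structure $a_i=\frac{(2i-1)u}{2}$; (3) impose the two remaining stationarity equations to solve for $u=u(n)$ and $b_1=b_1(n)$; (4) substitute back to get the closed form $\tl V_n$ and show it is of order $\frac{c_1}{n^2}+c_0$ with a positive constant floor $c_0$ coming from the lone $S_2$ point being unable to shrink its cell below a fixed size; (5) exhibit a competitor with two points on each side (e.g.\ obtained from $\ga_{n-1}$ or from the balanced equal-step ansatz) whose error is strictly smaller, giving the contradiction. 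The main obstacle is step (3)--(4): solving the coupled nonlinear stationarity system in $u$ and $b_1$ and then extracting a clean enough closed form (or at least a usable lower bound) for $\tl V_n$ to make the comparison in step (5) rigorous rather than merely numerical; I expect one must show the lone point on $S_2$ forces an irreducible $\Theta(1)$ contribution to the error — essentially because its Voronoi cell on the base has length bounded below — whereas the balanced configuration drives the error to a strictly smaller constant plus $O(1/n^2)$, and this constant gap is what closes the argument for all $n\geq 4$.
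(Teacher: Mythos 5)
Your proposal follows essentially the same route as the paper: assume (after Lemma~\ref{lemmaD1}) that exactly one point lies on $S_2$, derive the arithmetic-progression structure $a_i=\frac{(2i-1)u}{2}$ of the $S_1$-points, solve the remaining stationarity equations for $u$ and $b_1$ to obtain a closed form for the constrained minimum, and note that this quantity has a constant floor (about $0.2601$) strictly exceeding the error of a balanced two-per-side competitor. The paper closes the contradiction slightly more cleanly by combining the monotonicity $V_n\leq V_4=\frac{49}{192}$ with the explicit limit $\frac{1}{4\sqrt{3/13}+25/13}>\frac{49}{192}$ of that closed form, and it handles $n=4$ separately via Remark~\ref{remD1}, but these are only cosmetic differences from your plan.
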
 
\begin{proof}
 By Remark~\ref{remD1}, the lemma is true for $n=4$. Let us now prove the lemma for $n\geq 5$. Let $\ga_n$ be an optimal set of $n$-points for $n\geq 5$ with the corresponding $n$th constrained quantization error $V_n$. By Lemma~\ref{lemmaD1}, $\ga_n$ contains at least one element from each of $S_1$ and $S_2$. We need to show that $\ga_n$ must contain at least two elements from each of $S_1$ and $S_2$. For the sake of contradiction, without any loss of generality, we can assume that $\ga_n$ contains exactly one element from  $S_2$. Then, $\ga_n$ contains $\ell:=n-1$ elements from $S_1$. 
We can write 
\[\ga_n:=\set{(a_i, a_i\sqrt 3) : 1\leq i\leq \ell}\uu \set{(b_1, -(b_1-2)\sqrt 3},\]
where $0<a_1<a_2<a_3<\cdots<a_{\ell}<1$ and $1<b_1<2$. In fact, by Note~\ref{note11}, we have  $0\leq a_1<a_2<a_3<\cdots<a_{\ell}\leq \frac 12$.
Let $(d, 0)$ be the point where the boundary of the Voronoi regions of $(a_\ell, a_\ell \sqrt 3)$ and $(b_1, -(b_1-2) \sqrt 3)$ intersects the support of $P$. Then, solving the canonical equation 
\[\rho((a_\ell, a_\ell\sqrt 3), (d, 0))-\rho((b_1, -(b_1-2)\sqrt 3), (d, 0))=0,\]
we have 
\begin{equation} \label{eq31} d=\frac{4 b_1^2-12 b_1-(1-2 l)^2 u^2+12}{2 b_1-2 l u+u}.
\end{equation} The quantization error $V_n$ is given by 
\begin{align*}
V_n&=\frac 12\Big(\int_0^{2(a_1+a_2)}\rho(x, (a_1, a_1\sqrt 3)) \, dx+\sum_{i=2}^{\ell-1}\int_{2(a_{i-1}+a_i)}^{2(a_{i}+a_{i+1})}\rho(x, (a_i, a_i\sqrt 3)) \, dx\\
&\qquad   + \int_{2(a_{\ell-1}+a_\ell)}^d\rho(x, (a_\ell, a_\ell\sqrt 3)) \,dx+\int_d^2\rho(x, (b_1, -(b_1-2)\sqrt 3))\,dx \Big).
\end{align*}
Since $V_n$ gives the optimal error and is differentiable with respect to $a_i$ and $b_j$, we have $\frac{\pa}{\pa a_i} V_n=0$ and $\frac{\pa}{\pa b_j} V_n=0$ for $1\leq i\leq \ell$, and $j=1$.
Solving the equations $\frac{\pa}{\pa a_i} V_n=0$ for $1\leq i\leq \ell-1$, we have 
\[2a_1=a_2-a_1=a_3-a_2=\cdots=a_\ell-a_{\ell-1}=u  \te{ implying } a_i=\frac {(2i-1)u}{2} \te{ for } 1\leq i\leq \ell,\]
where $u$ is a constant depending on $\ell$ such that $0<u<1$. 
 Now, putting the values of $a_i$, we have 
 \begin{align*}
 \int_0^{2(a_1+a_2)}\rho(x, (a_1, a_1\sqrt 3)) \, dx&=\frac{52 u^3}{3},\\
 \sum_{i=2}^{\ell-1}\int_{2(a_{i-1}+a_i)}^{2(a_{i}+a_{i+1})}\rho(x, (a_i, a_i\sqrt 3)) \, dx&=\frac{4}{3} \Big(12 \ell^3 u^3-36 \ell^2 u^3+37 \ell u^3-26 u^3\Big).
\end{align*}
Moreover, putting the values of $a_{\ell-1},\, a_\ell$ and $d$ in terms of $\ell$, $u$ and $b_1$, we have 
\begin{align*}
 &\int_{2(a_{\ell-1}+a_\ell)}^d\rho(x, (a_\ell, a_\ell\sqrt 3)) \,dx+\int_d^2\rho(x, (b_1, -(b_1-2)\sqrt 3))\,dx\\
 &=b_1 (1-2 \ell)^2 u^2-2 ((b_1-6) b_1+6) (2 \ell-1) u-\frac{72 (b_1-1)^2}{2 b_1-2 \ell u+u}-4 b_1 ((b_1-8) b_1+13)\\
 &\qquad +\frac{1}{6} (101-2 \ell (2 \ell (26 \ell-75)+147)) u^3+\frac{80}{3}.
\end{align*}
Now, putting all the corresponding values in the expression for $V_n$, we have 
\begin{align*}
V_n&=\frac{1}{2}\Big(\frac{52 u^3}{3} +\frac{4}{3} \left(12 \ell^3 u^3-36 \ell^2 u^3+37 \ell u^3-26 u^3\right)+b_1 (1-2 \ell)^2 u^2-2 ((b_1-6) b_1+6) (2 \ell-1) u\\
&\qquad -\frac{72 (b_1-1)^2}{2 b_1-2 \ell u+u}-4 b_1 ((b_1-8) b_1+13)+\frac{1}{6} (101-2 \ell (2 \ell (26 \ell-75)+147)) u^3+\frac{80}{3}\Big).
\end{align*}
Now, notice that $V_n$ is a function of $\ell, \,u $ and $b_1$. $V_n$ being optimal we have 
\[
\frac{\pa}{\pa u} V_n=0  \te{ and } \frac{\pa}{\pa b_1} V_n=0.\] 
Solving the above two equations, we deduce that
\[u=-\frac{\sqrt{156 \ell^2+13}-13 \ell}{2 \left(\ell^2-1\right)} \te{ and } b_1=-\frac{-20 \ell^2+\sqrt{156 \ell^2+13} \ell+7}{4 \left(\ell^2-1\right)}.\]
Hence, putting the values of $u$ and $b_1$, in the last expression for $V_n$, we obtain
\begin{equation} \label{eqD3} V_n=\frac{1}{\frac{24 l}{\sqrt{156 l^2+13}}+\frac{1}{-12 l^2-1}+\frac{25}{13}}=\frac{1}{\frac{24 (n-1)}{\sqrt{156 (n-1)^2+13}}+\frac{1}{-12 (n-1)^2-1}+\frac{25}{13}}.
\end{equation} 
Recall that the $n$th constrained quantization error $V_n$ is a decreasing sequence of real numbers, i.e., $V_1\geq V_2\geq V_3\geq V_4\geq V_n$ for all $n\geq 5$. Also, recall that by Remark~\ref{remD1}, we have $V_4=\frac{49}{192}$. But, by \eqref{eqD3}, we see that for all $n\geq 5$, 
\[V_n\geq \lim_{n\to\infty} V_n=\frac{1}{4 \sqrt{\frac{3}{13}}+\frac{25}{13}}>V_4,\]
which yields a contradiction. Thus, we conclude that for all $n\geq 4$, the set $\ga_n$ must contain at least two elements from each of $S_1$ and $S_2$, which is the lemma. 
\end{proof}

\begin{lemma} \label{lemmaD3}
For $n\geq 6$, let $\ga_n$ be an optimal set of $n$-points for $P$. Then, $\ga_n$ must contain at least three elements from each of $S_1$ and $S_2$. 
\end{lemma}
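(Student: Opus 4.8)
The plan is to imitate the proofs of Lemmas~\ref{lemmaD1} and~\ref{lemmaD2}. By Lemma~\ref{lemmaD2}, any optimal set $\ga_n$ with $n\ge 6$ already has at least two elements on each of $S_1$ and $S_2$, so—up to the reflection in the line $x=1$—the only configuration that must be excluded is ``exactly two elements on $S_2$ and $\ell:=n-2\ge 4$ elements on $S_1$''. I would first record a convenient upper bound for $V_6$: the symmetric configuration with three elements on each side, interface at $x=1$ and equal cells, namely $\set{(\tfrac{1}{24},\tfrac{\sqrt3}{24}),(\tfrac18,\tfrac{\sqrt3}{8}),(\tfrac{5}{24},\tfrac{5\sqrt3}{24})}$ together with its mirror image in $x=1$, has distortion $\tfrac{109}{432}$, whence $V_6\le\tfrac{109}{432}$ and so $V_n\le\tfrac{109}{432}$ for all $n\ge 6$, since $V_n$ is decreasing.

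For the contradiction, suppose $n\ge 6$ and that $\ga_n$ has exactly two elements on $S_2$, say
\[
\ga_n=\set{(a_i,a_i\sqrt3):1\le i\le\ell}\uu\set{(b_2,-(b_2-2)\sqrt3),(b_1,-(b_1-2)\sqrt3)},
\]
with $0\le a_1<\cdots<a_\ell\le\tfrac12<\tfrac32\le b_2<b_1\le 2$ as in \eqref{D1}. The Voronoi boundaries of consecutive $S_1$-elements meet the support at $2(a_{i-1}+a_i)$, that of $(b_2,\cdot)$ and $(b_1,\cdot)$ at $2(b_1+b_2)-6$, and that of $(a_\ell,\cdot)$ and $(b_2,\cdot)$ at a point $d$ determined by the canonical equation, exactly as in \eqref{eq31}. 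Writing $V_n$ as the corresponding sum of integrals, the equations $\frac{\pa}{\pa a_i}V_n=0$ for $1\le i\le\ell-1$ give, just as before, $2a_1=a_2-a_1=\cdots=a_\ell-a_{\ell-1}=u$, so $a_i=\tfrac{(2i-1)u}{2}$. After this substitution $V_n$ becomes an explicit function of $\ell,u,b_1,b_2$; imposing $\frac{\pa}{\pa u}V_n=\frac{\pa}{\pa b_1}V_n=\frac{\pa}{\pa b_2}V_n=0$, solving for $u,b_1,b_2$ as radical functions of $\ell$, and back-substituting, yields a closed form $V_n=g(\ell)=g(n-2)$, the analogue of \eqref{eqD3}.

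It then remains to show $g(\ell)\ge\lim_{m\to\infty}g(m)=:L_2>\tfrac{109}{432}$ for every $\ell\ge 4$, which contradicts $V_n=g(n-2)\le\tfrac{109}{432}$. The limit is obtained by a short auxiliary computation: as $m\to\infty$ the $S_1$-elements become dense on a subinterval $[0,d_0]$ of the support, contributing $\tfrac{d_0^3}{8}$ to the limiting error, while the two $S_2$-elements cover $[d_0,2]$ optimally, contributing $\tfrac{49(2-d_0)^3}{384}$ (the optimal split of $[d_0,2]$ being into two equal halves); minimizing $\tfrac{d_0^3}{8}+\tfrac{49(2-d_0)^3}{384}$ over $d_0$ gives $d_0=98-56\sqrt3$ and $L_2=\tfrac{d_0^2}{4}=4753-2744\sqrt3$. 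One then checks the inequality $4753-2744\sqrt3>\tfrac{109}{432}$, i.e. $\sqrt3<\tfrac{2053187}{1185408}$, which holds; using that $g$ is decreasing in $\ell$ (so $g(\ell)\ge L_2$ throughout), the contradiction holds for all $n\ge 6$, which is the lemma.

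The main obstacle is twofold. Algebraically, the first-order system now carries the three coupled unknowns $u,b_1,b_2$—with $d$ itself a rational function of $u$ and $b_2$ through the canonical equation—so solving $\partial_uV_n=\partial_{b_1}V_n=\partial_{b_2}V_n=0$ and simplifying $g(\ell)$ is noticeably heavier than the single-interface case of Lemma~\ref{lemmaD2}, and along the way one must confirm that the critical point found is the intended minimum and respects the ordering in \eqref{D1}. Quantitatively, the decisive inequality $L_2>\tfrac{109}{432}$ holds only by a very narrow margin—both numbers sit just above the common limiting value $\tfrac14$ of the sequence $V_n$—so the bound on $V_6$ must be computed exactly rather than merely estimated, and verifying $\sqrt3<\tfrac{2053187}{1185408}$ with sufficient precision is the delicate endpoint of the argument.
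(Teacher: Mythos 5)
Your proposal is correct and follows essentially the same route as the paper: assume for contradiction exactly two elements on $S_2$, solve the first-order conditions to obtain a closed form $g(\ell)=g(n-2)$ for the distortion of that family, and contradict the monotonicity of $V_n$; your limit $L_2=4753-2744\sqrt 3$ is exactly the paper's $\lim_{n\to\infty}V_n=0.252584\ldots$ for this family. The only real difference is the endgame: you make a single exact comparison $g(\ell)\ge 4753-2744\sqrt 3>\tfrac{109}{432}\ge V_6\ge V_n$ (the decisive inequality reduces to $2053187^2>3\cdot 1185408^2$, which does hold, with difference $477577$), whereas the paper splits $n=6$ from $n\ge 7$, works with decimal approximations, and additionally needs the seven-point configuration of Example~\ref{exam2}; your version is marginally cleaner, provided you also record the (easy) fact that $g$ is decreasing in $\ell$.
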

\begin{proof}
For $n\geq 6$, let $\ga_n$ be an optimal set of $n$-points for $P$. Let $\te{card}(\ga_n\ii S_1)=\ell$ and $\te{card}(\ga_n\ii S_2)=m$. 
By Lemma~\ref{lemmaD2}, $\ell, m\geq 2$. We need to prove that $\ell, m\geq 3$. For the sake of contradiction, without any loss of generality, we can assume that $\ga_n$ contains exactly two elements from  $S_2$, i.e., $m=2$. Then, $\ga_n$ contains $\ell:=n-2$ elements from $S_1$. 
We can write 
\[\ga_n:=\set{(a_i, a_i\sqrt 3) : 1\leq i\leq \ell}\uu \set{(b_2, -(b_2-2)\sqrt 3), (b_1, -(b_1-2)\sqrt 3)},\]
where $0<a_1<a_2<a_3<\cdots<a_{\ell}<1$ and $1<b_2<b_1<2$. In fact, by Note~\ref{note11}, we have  $0\leq a_1<a_2<a_3<\cdots<a_{\ell}\leq \frac 12$ and $\frac 32\leq b_2<b_1\leq 2$.
Let $(d, 0)$ be the point where the boundary of the Voronoi regions of $(a_\ell, a_\ell \sqrt 3)$ and $(b_2, -(b_2-2) \sqrt 3)$ intersects the support of $P$. Then, solving the canonical equation 
\[\rho((a_\ell, a_\ell\sqrt 3), (d, 0))-\rho((b_2, -(b_2-2)\sqrt 3), (d, 0))=0,\]
we have 
\begin{equation} \label{eq31} d=\frac{2 \left(a_l^2-b_2^2+3 b_2-3\right)}{a_l-b_2}.
\end{equation} The quantization error $V_n$ is given by 
\begin{align*}
V_n&=\frac 12\Big(\int_0^{2(a_1+a_2)}\rho(x, (a_1, a_1\sqrt 3)) \, dx+\sum_{i=2}^{\ell-1}\int_{2(a_{i-1}+a_i)}^{2(a_{i}+a_{i+1})}\rho(x, (a_i, a_i\sqrt 3)) \, dx\\
&\qquad   + \int_{2(a_{\ell-1}+a_\ell)}^d\rho(x, (a_\ell, a_\ell\sqrt 3)) \,dx+\int_d^{2(b_1+b_2)-6} \rho(x, (b_1, -(b_1-2)\sqrt 3))\,dx \\
&\qquad +\int_{2(b_1+b_2)-6}^2 \rho(x, (b_2, -(b_2-2)\sqrt 3))\,dx\Big).
\end{align*}
Since $V_n$ gives the optimal error and is differentiable with respect to $a_i$ and $b_j$, we have $\frac{\pa}{\pa a_i} V_n=0$ and $\frac{\pa}{\pa b_j} V_n=0$ for $1\leq i\leq \ell$, and $j=1$.
Solving the equations $\frac{\pa}{\pa a_i} V_n=0$ for $1\leq i\leq \ell-1$, we have 
\[2a_1=a_2-a_1=a_3-a_2=\cdots=a_\ell-a_{\ell-1}=u  \te{ implying } a_i=\frac {(2i-1)u}{2} \te{ for } 1\leq i\leq \ell,\]
where $u$ is a constant depending on $\ell$ such that $0<u<1$.  Solving the equations $\frac{\pa}{\pa b_1} V_n=0$, we have $b_2=3b_1-4$. 
 Now, putting the values of $a_i$, we have 
 \begin{align*}
 \int_0^{2(a_1+a_2)}\rho(x, (a_1, a_1\sqrt 3)) \, dx&=\frac{52 u^3}{3},\\
 \sum_{i=2}^{\ell-1}\int_{2(a_{i-1}+a_i)}^{2(a_{i}+a_{i+1})}\rho(x, (a_i, a_i\sqrt 3)) \, dx&=\frac{4}{3} \Big(12 \ell^3 u^3-36 \ell^2 u^3+37 \ell u^3-26 u^3\Big).
\end{align*}
Moreover, putting the values of $a_{\ell-1},\, a_\ell$ and $d$ in terms of $\ell$, $u$ and $b_1$, we have 
\begin{align*}
 &\int_{2(a_{\ell-1}+a_\ell)}^d\rho(x, (a_\ell, a_\ell\sqrt 3)) \,dx+\int_d^{2(b_1+b_2)-6} \rho(x, (b_1, -(b_1-2)\sqrt 3))\,dx \\
 &\qquad +\int_{2(b_1+b_2)-6}^2 \rho(x, (b_2, -(b_2-2)\sqrt 3))\,dx\\
 &=b_1\Big(-6 b_1 ((6 l-3) u+8)+20 b_1^2+3 (2 l-1) u ((2 l-1) u+28)-72\Big)+\frac{18 (-2 l u+u+2)^2}{-6 b_1+(2 l-1) u+8}\\
 &\qquad +\Big(-\frac{52 l^3}{3}+50 l^2-49 l+\frac{101}{6}\Big) u^3-4 (1-2 l)^2 u^2+110 (1-2 l) u+\frac{584}{3}.
\end{align*}
Now, putting all the corresponding values in the expression for $V_n$, we have 
\begin{align*}
V_n&=\frac{1}{2}\Big(\frac{52 u^3}{3} +\frac{4}{3} \Big(12 \ell^3 u^3-36 \ell^2 u^3+37 \ell u^3-26 u^3\Big)+b_1 \Big(-6 b_1 ((6 l-3) u+8)+20 b_1^2\\
&+3 (2 l-1) u ((2 l-1) u+28)-72\Big)+\frac{18 (-2 l u+u+2)^2}{-6 b_1+(2 l-1) u+8}\\
 &\qquad +\Big(-\frac{52 l^3}{3}+50 l^2-49 l+\frac{101}{6}\Big) u^3-4 (1-2 l)^2 u^2+110 (1-2 l) u+\frac{584}{3}\Big).
\end{align*}
Now, notice that $V_n$ is a function of $\ell, \,u $ and $b_1$. $V_n$ being optimal we have 
\[
\frac{\pa}{\pa u} V_n=0  \te{ and } \frac{\pa}{\pa b_1} V_n=0.\] 
Solving the above two equations, we deduce that
\[u= \frac{1}{\frac{4}{7} \sqrt{12 l^2+1}+2 l} \te{ and } b_1= \frac{-32 l^2+7 \sqrt{12 l^2+1} l+30}{16-4 l^2}.\]
Hence, putting the values of $u$ and $b_1$, in the last expression for $V_n$, we obtain
\begin{equation} \label{eqD3} V_n=\frac{1}{\frac{48 l}{7 \sqrt{12 l^2+1}}+\frac{1}{-12 l^2-1}+\frac{97}{49}}=\frac{1}{\frac{48 (n-2)}{7 \sqrt{12 (n-2)^2+1}}+\frac{1}{-12 (n-2)^2-1}+\frac{97}{49}}.
\end{equation} 
Now, putting $n=6$ in the above expression for $V_n$, i.e., when $\ell=4$ and $m=2$, we see that $V_6=0.253244$, which is larger than the value of $V_6$ when $\ell=m=3$ (see Example~\ref{exam1}). 
Again, $V_n$ being a decreasing sequence, we must have $V_7\geq V_8\geq V_9\geq \cdots$, i.e., $V_7\geq \mathop{\lim}\limits_{n\to \infty} V_n=0.252584$, which is larger than the value of $V_7$ when it is obtained if $\ell=4$ and $m=3$. (see Example~\ref{exam2}).  Thus, we conclude that for all $n\geq 6$, the expression for $V_n$ given by \eqref{eqD3}, gives a contradiction. Hence, we can conclude that for $n\geq 6$, the optimal set $\ga_n$ must contain at least three elements from each of $S_1$ and $S_2$, which is the lemma. 
\end{proof}

Let us now state and prove the following theorem, which gives all the optimal sets of $n$-points and the $n$th constrained quantization error for all $n\geq 6$. 

\begin{theorem} \label{theo1}
Let $n\in \D N$ with $n\geq 6$. Let $\ga_n$ be an optimal set of $n$-points such that $\te{card}(\ga_n\ii S_1)=\ell$ and $\te{card}(\ga_n\ii S_2)=m$ with $\ell, m\geq 3$. Then, 
\[\ga_n=\Big\{(a_i, a_i\sqrt 3) : 1\leq i\leq \ell\Big\}\UU \Big\{(b_j, -(b_j-2)\sqrt 3) :  1\leq j\leq m\Big\},\]
where $a_i=\frac {(2i-1)u}{2}$ and $b_j=2-\frac {(2j-1)v}{2}$, where $u$, $v$ are given by 
 \[u=\frac{1}{2 \left(\frac{\left(12 \ell^2+1\right) m}{\sqrt{\left(12 \ell^2+1\right) \left(12 m^2+1\right)}}+\ell\right)} \te{ and } v=\frac{1}{2 \left(\frac{\ell \left(12 m^2+1\right)}{\sqrt{\left(12 \ell^2+1\right) \left(12 m^2+1\right)}}+m\right)},\] and the constrained quantization error for $n$-points is given by 
\[V_n=\frac{\left(12 \ell^2+1\right) \left(12 m^2+1\right)}{12 \left(24 \ell^2 m^2+2 \ell m \sqrt{\left(12 \ell^2+1\right) \left(12 m^2+1\right)}+\ell^2+m^2\right)}.\]
 \end{theorem}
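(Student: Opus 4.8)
The plan is to reduce the optimization to a finite-dimensional calculus problem exactly as in the proofs of Lemmas~\ref{lemmaD1}--\ref{lemmaD3}, and then push the same computation through with both $\ell$ and $m$ left as free parameters. By Lemma~\ref{lemmaD3} we already know $\ell,m\geq 3$, so fix such an optimal set $\ga_n$ and write $\ga_n\ii S_1=\set{(a_i,a_i\sqrt 3):1\leq i\leq \ell}$ and $\ga_n\ii S_2=\set{(b_j,-(b_j-2)\sqrt 3):1\leq j\leq m}$ with the ordering \eqref{D1}. First I would invoke Proposition~\ref{prop000} together with Note~\ref{note11} to record that the interior boundaries between consecutive points on $S_1$ meet the support at $(2(a_{i-1}+a_i),0)$, the interior boundaries on $S_2$ meet it at $(2(b_{j+1}+b_j)-6,0)$, and the single ``crossover'' boundary between $(a_\ell,a_\ell\sqrt3)$ and $(b_m,-(b_m-2)\sqrt3)$ meets it at the point $(d,0)$ determined by the canonical equation $\rho((a_\ell,a_\ell\sqrt3),(d,0))-\rho((b_m,-(b_m-2)\sqrt3),(d,0))=0$. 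This gives an explicit integral expression for $V(P;\ga_n)$ as a piecewise sum, hence a $C^1$ function of the variables $a_1,\dots,a_\ell,b_1,\dots,b_m$.

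Next I would impose the stationarity conditions $\frac{\pa}{\pa a_i}V=0$ and $\frac{\pa}{\pa b_j}V=0$. Exactly as in Lemma~\ref{lemmaD1}, the equations $\frac{\pa}{\pa a_i}V=0$ for $1\leq i\leq \ell-1$ force the arithmetic-progression structure $2a_1=a_2-a_1=\cdots=a_\ell-a_{\ell-1}=:u$, so $a_i=\frac{(2i-1)u}{2}$; by the mirror symmetry across $x=1$ used in the earlier proofs, the equations in $b_j$ for $1\leq j\leq m-1$ likewise give $b_j=2-\frac{(2j-1)v}{2}$ for a constant $v$. Substituting these, $V$ collapses to a function of the two scalars $u,v$ (with $\ell,m$ as parameters); the remaining two equations $\frac{\pa}{\pa u}V=0$ and $\frac{\pa}{\pa v}V=0$ should, after simplification, yield a linear-in-$(u,v)$ system once one eliminates the common radical, producing
\[
u=\frac{1}{2\left(\frac{\left(12\ell^2+1\right)m}{\sqrt{\left(12\ell^2+1\right)\left(12m^2+1\right)}}+\ell\right)},\qquad
v=\frac{1}{2\left(\frac{\ell\left(12m^2+1\right)}{\sqrt{\left(12\ell^2+1\right)\left(12m^2+1\right)}}+m\right)}.
\]
Plugging these back into $V$ and simplifying gives the claimed closed form for $V_n$. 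Since $V$ is a proper coercive function on the relevant closed region and the interior critical point is unique up to the labelling symmetry $S_1\leftrightarrow S_2$, this critical point realizes the minimum over all configurations with the given $(\ell,m)$.

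The routine-but-bulky part is the algebra: expanding the integrals, carrying the radical $\sqrt{(12\ell^2+1)(12m^2+1)}$ through the solution of the $2\times2$ system, and verifying that back-substitution produces precisely the stated $V_n$; this is best organized by first computing the three ``block'' integrals (the boundary block at $0$, the telescoping middle sum on $S_1$, and the crossover block through $(d,0)$) in terms of $u$, mirroring the displayed formulas in Lemma~\ref{lemmaD3}, and then doing the same on the $S_2$ side. The genuine obstacle, and the step I would be most careful about, is the elimination: the stationarity equation $\frac{\pa}{\pa u}V=0$ involves $d=d(u,v,\ell,m)$ through the crossover integral, so one must check that after differentiating and clearing the denominator $2b_m-2\ell u+u$ (equivalently the quantity $-6b_m+(2\ell-1)u+8$ appearing earlier) the system is genuinely consistent and selects the root with $0<u<\tfrac1{2\ell}$, $0<v<\tfrac1{2m}$ compatible with \eqref{D1}; the mirror symmetry across $x=1$ is what guarantees that the $b_j$-equations decouple in the same way as the $a_i$-equations, and I would state that symmetry reduction explicitly rather than redo the $S_2$ computation from scratch.
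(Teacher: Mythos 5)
Your proposal follows essentially the same route as the paper's own proof: the same Voronoi decomposition of the distortion integral with the crossover point $(d,0)$ determined by the canonical equation, the same stationarity conditions forcing $a_i=\frac{(2i-1)u}{2}$ and $b_j=2-\frac{(2j-1)v}{2}$, and the same reduction to a two-variable system in $(u,v)$ that is solved and back-substituted to yield the stated $V_n$. The only cosmetic difference is your explicit appeal to the mirror symmetry across $x=1$ to handle the $S_2$ equations, where the paper simply carries out the analogous computation directly.
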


\begin{proof}
Let $\ga_n$ be an optimal set of $n$-points for $n\geq 6$ such that $\ga_n$ contains $\ell$ elements from $S_1$, and $m$ elements from $S_2$ with $\ell, m\geq 3$. Let
\[\ga_n\ii S_1:=\set{(a_i, a_i\sqrt 3) : 1\leq i\leq \ell} \te{ and }\ga_n\ii S_2:=\set{(b_j, -(b_j-2)\sqrt 3) : 1\leq j\leq m},\]
where $a_1<a_2<a_3<\cdots<a_\ell$ and $b_m<b_{m-1}<b_{m-2}<\cdots<b_1$. In fact, by Note~\ref{note11}, we have  $0\leq a_1<a_2<a_3<\cdots<a_{\ell}\leq \frac 12$ and $\frac 32\leq b_m<b_{m-1}<b_{m-2}<\cdots<b_1\leq 2$. 
Let $(d, 0)$ be the point where the boundary of the Voronoi regions of $(a_\ell, a_\ell \sqrt 3)$ and $(b_m, -(b_m-2) \sqrt 3)$ intersects the support of $P$. Then, solving the canonical equation 
\[\rho((a_\ell, a_\ell\sqrt 3), (d, 0))-\rho((b_m, -(b_m-2)\sqrt 3), (d, 0))=0,\]
we have 
\begin{equation} \label{eq31} d=\frac{2 (a_\ell^2-b_m^2+3 b_m-3)}{a_\ell-b_m}.
\end{equation} The quantization error $V_n$ is given by 
\begin{align*}
V_n&=\frac 12\Big(\int_0^{2(a_1+a_2)}\rho(x, (a_1, a_1\sqrt 3)) \, dx+\sum_{i=2}^{\ell-1}\int_{2(a_{i-1}+a_i)}^{2(a_{i}+a_{i+1})}\rho(x, (a_i, a_i\sqrt 3)) \, dx\\
&\qquad   + \int_{2(a_{\ell-1}+a_\ell)}^d\rho(x, (a_\ell, a_\ell\sqrt 3)) \,dx+\int_d^{2(b_m+b_{m-1})-6}\rho(x, (b_m, -(b_m-2)\sqrt 3))\,dx \\
&\qquad +\sum_{j=2}^{m-1}\int_{2(b_{j+1}+b_j)-6}^{2(b_j+b_{j-1})-6}\rho(x, (b_j, -(b_j-2)\sqrt 3)) \, dx+\int_{2(b_2+b_1)-6}^2 \rho(x, (b_1, -(b_1-2)\sqrt 3))\,dx\Big).
\end{align*}
Since $V_n$ gives the optimal error and is differentiable with respect to $a_i$ and $b_j$, we have $\frac{\pa}{\pa a_i} V_n=0$ and $\frac{\pa}{\pa b_j} V_n=0$ for $1\leq i\leq \ell$, and $1\leq j\leq m$.
Solving the equations $\frac{\pa}{\pa a_i} V_n=0$ for $1\leq i\leq \ell-1$, we have 
\[2a_1=a_2-a_1=a_3-a_2=\cdots=a_\ell-a_{\ell-1}=u  \te{ implying } a_i=\frac {(2i-1)u}{2} \te{ for } 1\leq i\leq \ell,\]
where $u$ is a constant depending on $\ell$ such that $0<u<1$. Similarly, solving the equations $\frac{\pa}{\pa b_j} V_n=0$ for $1\leq j\leq m-1$, we have 
\[4-2b_1=b_1-b_2=b_2-b_3=\cdots=b_{m-1}-b_m=v \te{ implying } b_j=2-\frac {(2j-1)v}{2} \te{ for } 1\leq j\leq m,\]
 where $v$ is a constant depending on $m$ such that $0<v<1$. Putting $a_\ell=\frac{1}{2} (2 \ell-1) u$ and $b_m= 2-\frac{1}{2} (2 m-1) v$ in \eqref{eq31}, we have 
 \[d=\frac{-4 \ell^2 u^2+4 \ell u^2+4 m^2 v^2-4 m v^2-4 m v-u^2+v^2+2 v+4}{-2 \ell u-2 m v+u+v+4}.\]
 Now, putting the values of $a_i$ and $b_j$, we have 
 \begin{align*}
 \int_0^{2(a_1+a_2)}\rho(x, (a_1, a_1\sqrt 3)) \, dx&=\frac{52 u^3}{3},\\
 \int_{2(b_2+b_1)-6}^2 \rho(x, (b_1, -(b_1-2)\sqrt 3))\,dx&=\frac{52 v^3}{3},\\
 \sum_{i=2}^{\ell-1}\int_{2(a_{i-1}+a_i)}^{2(a_{i}+a_{i+1})}\rho(x, (a_i, a_i\sqrt 3)) \, dx&=\frac{4}{3} \Big(12 \ell^3 u^3-36 \ell^2 u^3+37 \ell u^3-26 u^3\Big),\\
 \sum_{j=2}^{m-1}\int_{2(b_{j+1}+b_j)-6}^{2(b_j+b_{j-1})-6}\rho(x, (b_j, -(b_j-2)\sqrt 3)) \, dx&=\frac{4}{3} \Big(12 m^3 v^3-36 m^2 v^3+37 m v^3-26 v^3\Big).
\end{align*}
Moreover, putting the values of $a_{\ell-1},\, a_\ell,\, b_m\, b_{m-1}$, and $d$ in terms of $\ell, m, u$ and $v$, we have 
\begin{align*}
 &\int_{2(a_{\ell-1}+a_\ell)}^d\rho(x, (a_\ell, a_\ell\sqrt 3)) \,dx+\int_d^{2(b_m+b_{m-1})-6}\rho(x, (b_m, -(b_m-2)\sqrt 3))\,dx\\
 &=\frac{1}{6 ((2 \ell-1) u+(2 m-1) v-4)}\Big((8 \ell (\ell (-26 \ell^2+88 \ell-111)+62)-101) u^4-16\\
 &+32 (m-1) (2 m (8 m-13)+13) v^3-24 (1-2 m)^2 v^2+16 (2 m-1) v\\
 &+(8 m (m (-26 m^2+88 m-111)+62)-101) v^4-8 (\ell-1) (2 \ell (8 \ell-13)+13) u^3 ((2 m-1) v-4)\\
 &-6 (u-2 \ell u)^2 ((2 m-1) v ((2 m-1) v-2)+4)-4 (2 \ell-1) u (2 (m-1) v-1) (v ((2 m (8 m-13)+13) v\\
 &+2 m-5)+4)\Big).
\end{align*}
Now, putting all the corresponding values in the expression for $V_n$, we have 
\begin{align*}
V_n&=-\frac{1}{12 ((2 \ell-1) u+(2 m-1) v-4)}\Big(16 \ell (2 \ell^2-3 \ell+1) u^3 ((2 m-1) v-4)+(1-2 \ell)^2 (4 \ell^2-4 \ell-3) u^4\\
&+4 (2 \ell-1) u (4 m (2 m^2-3 m+1) v^3-3 (1-2 m)^2 v^2+(6 m-3) v-4)\\
&+6 (u-2 \ell u)^2 ((1-2 m)^2 v^2+(2-4 m) v+4)+(1-2 m)^2 (4 m^2-4 m-3) v^4\\
&-64 m (2 m^2-3 m+1) v^3+24 (1-2 m)^2 v^2-16 (2 m-1) v+16\Big).
\end{align*}
Notice that $V_n$ is a function of $\ell, \, m, \,u$ and $v$. $V_n$ being optimal we have 
\[
\frac{\pa}{\pa u} V_n=0  \te{ and } \frac{\pa}{\pa v} V_n=0.\] 
Solving the above two equations, we deduce that
\[u=\frac{1}{2 \left(\frac{\left(12 \ell^2+1\right) m}{\sqrt{\left(12 \ell^2+1\right) \left(12 m^2+1\right)}}+\ell\right)} \te{ and } v=\frac{1}{2 \left(\frac{\ell \left(12 m^2+1\right)}{\sqrt{\left(12 \ell^2+1\right) \left(12 m^2+1\right)}}+m\right)}.\]
After substituting the values of $u$ and $v$ in $V_n$, and then upon simplification, we have 
\[V_n=\frac{\left(12 \ell^2+1\right) \left(12 m^2+1\right)}{12 \left(24 \ell^2 m^2+2 \ell m \sqrt{\left(12 \ell^2+1\right) \left(12 m^2+1\right)}+\ell^2+m^2\right)}.\]
Thus, the proof of the theorem is complete. 
\end{proof} 

\begin{remark}\label{remD11}
From the expression of $V_n$ given by Theorem~\ref{theo1}, we see that $V_n$ is symmetric in $\ell$, $m$. Since $V_n$ is optimal and symmetric in $\ell, \, m$, we can deduce the following: If $n$ is of the form $n=2k$ for some $k\in \D N$, then $\ell=m=k$; on the other hand,  if $n$ is of the form $n=2k+1$ for some $k\in \D N$, then either $\ell=k+1$ and $m=k$, or $\ell=k$ and $m=k+1$. 
\end{remark} 
By Theorem~\ref{theo1} and Remark~\ref{remD11}, we deduce the following corollary. The corollary is needed to calculate the constrained quantization dimension and constrained quantization coefficient given in Section~\ref{sec4}. 
 
\begin{corollary}\label{corD1}
Let $n\geq 6$ be an even positive integer, i.e., $n=2k$ for some positive integer $k\geq 3$. Then, the set $\ga_n$ given by Theorem~\ref{theo1} forms an optimal set of $n$-points if $\ell=m=k$, and the corresponding constrained quantization error is given by 
\[V_n=\frac{1}{48} \left(12+\frac{1}{k^2}\right).\]
\end{corollary}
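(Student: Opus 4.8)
The plan is to combine Theorem~\ref{theo1} with Remark~\ref{remD11} and then perform a short algebraic simplification. First, since $n=2k$ is even with $k\geq 3$, Remark~\ref{remD11} tells us that for an optimal set $\ga_n$ of $n$-points we must have $\ell=m=k$, where $\ell=\te{card}(\ga_n\ii S_1)$ and $m=\te{card}(\ga_n\ii S_2)$: this is exactly the content extracted from the symmetry of the expression for $V_n$ in $\ell$ and $m$ proved in Theorem~\ref{theo1}, together with the fact that $V_n$ is the minimal error over all admissible ways of distributing the $n$ points between $S_1$ and $S_2$. Hence the set $\ga_n$ of Theorem~\ref{theo1} taken with $\ell=m=k$ is an optimal set of $n$-points, and its error is obtained by specializing the closed form of Theorem~\ref{theo1} at $\ell=m=k$.

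Next I would substitute $\ell=m=k$ into
\[V_n=\frac{(12\ell^2+1)(12m^2+1)}{12\left(24\ell^2m^2+2\ell m\sqrt{(12\ell^2+1)(12m^2+1)}+\ell^2+m^2\right)}.\]
The numerator collapses to $(12k^2+1)^2$. In the denominator, $\sqrt{(12k^2+1)(12k^2+1)}=12k^2+1$, so the bracketed quantity equals $24k^4+2k^2(12k^2+1)+2k^2=48k^4+4k^2=4k^2(12k^2+1)$, giving a denominator $48k^2(12k^2+1)$. Cancelling the common factor $12k^2+1$ leaves
\[V_n=\frac{12k^2+1}{48k^2}=\frac{1}{48}\left(12+\frac{1}{k^2}\right),\]
which is the asserted formula.

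The only point that carries content beyond this routine computation is the appeal to Remark~\ref{remD11}, i.e.\ the fact that the balanced allocation $\ell=m=k$ is the optimal one; if one wished to argue the corollary from scratch the main obstacle would be to verify that $\ell\mapsto V_n(\ell,2k-\ell)$ is minimized at $\ell=k$ among integers $3\leq\ell\leq 2k-3$ (equivalently, that balancing the two sides strictly lowers the error) rather than merely being symmetric in $\ell$ and $m$. Since that minimality is already built into Remark~\ref{remD11}, the proof of the corollary itself reduces to the substitution and simplification above.
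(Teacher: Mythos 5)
Your proof is correct and follows exactly the route the paper intends: the corollary is deduced by invoking Remark~\ref{remD11} to fix $\ell=m=k$ and then substituting into the closed form of Theorem~\ref{theo1}, and your algebraic simplification to $\frac{1}{48}\left(12+\frac{1}{k^2}\right)$ checks out. Your parenthetical caveat --- that the symmetry of $V_n$ in $\ell,m$ alone does not strictly force the balanced allocation to be the minimizer --- is a fair observation about Remark~\ref{remD11}, but since you (like the paper) take that remark as given, it does not affect the validity of your argument for the corollary itself.
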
 
We now give the following two examples. 
\begin{exam} \label{exam1} \tit{Optimal set of six-points:} For $n=6$, we have $\ell=m=3$, and hence, $u=v=\frac 1{12}$ implying 
\begin{align*}
\ga_6=\left\{(a_i, a_i\sqrt 3) : 1\leq i\leq 3\right\}\UU \left\{(b_j, -(b_j-2)\sqrt 3) : 1\leq j\leq 3\right\},
\end{align*}
where $a_i=\frac {(2i-1)u}{2}$ and $b_j=2-\frac {(2j-1)v}{2}$ for $1\leq i, j\leq 3$. 
Thus, we have 
\begin{align*}
\ga_6=\Big\{(\frac{1}{24},\frac{1}{8 \sqrt{3}}), (\frac{1}{8},\frac{\sqrt{3}}{8}),(\frac{5}{24},\frac{5}{8 \sqrt{3}}), (\frac{43}{24},\frac{5}{8 \sqrt{3}}), (\frac{15}{8},\frac{\sqrt{3}}{8}), (\frac{47}{24},\frac{1}{8 \sqrt{3}})\Big\}
\end{align*}
with the constrained quantization error $V_6=\frac{109}{432}=0.252315$ (see Figure~\ref{Fig1}). 
\end{exam} 

\begin{exam}\label{exam2} \tit{Optimal set of seven-points:} For $n=7$, we have either $\ell=4$ and $m=3$, or $\ell=3$ and $m=4$. Let us take $\ell=4$ and $m=3$, and then 
\[u=\frac{1}{2 \left(3 \sqrt{\frac{193}{109}}+4\right)} \te{ and } v=\frac{1}{8 \sqrt{\frac{109}{193}}+6}\] implying 
\begin{align*}
\ga_7=\left\{(a_i, a_i\sqrt 3) : 1\leq i\leq 4\right\}\UU \left\{(b_j, -(b_j-2)\sqrt 3) : 1\leq j\leq 3\right\},
\end{align*}
where $a_i=\frac {(2i-1)u}{2}$ and $b_j=2-\frac {(2j-1)v}{2}$ for $1\leq i\leq 4$ and $1\leq j\leq 3$. Thus, we have 
\begin{align*}
\ga_7&=\Big\{(0.0312814,0.054181),(0.0938443,0.162543),(0.156407,0.270905), (0.21897,0.379267),\\
&\qquad (1.79188,0.360481),(1.87513,0.216289),(1.95838,0.0720962)\Big\}
\end{align*}
with the constrained quantization error $V_7=\frac{21037}{288 \sqrt{21037}+41772}=0.251808$ (see Figure~\ref{Fig1}). 
\end{exam}
 
 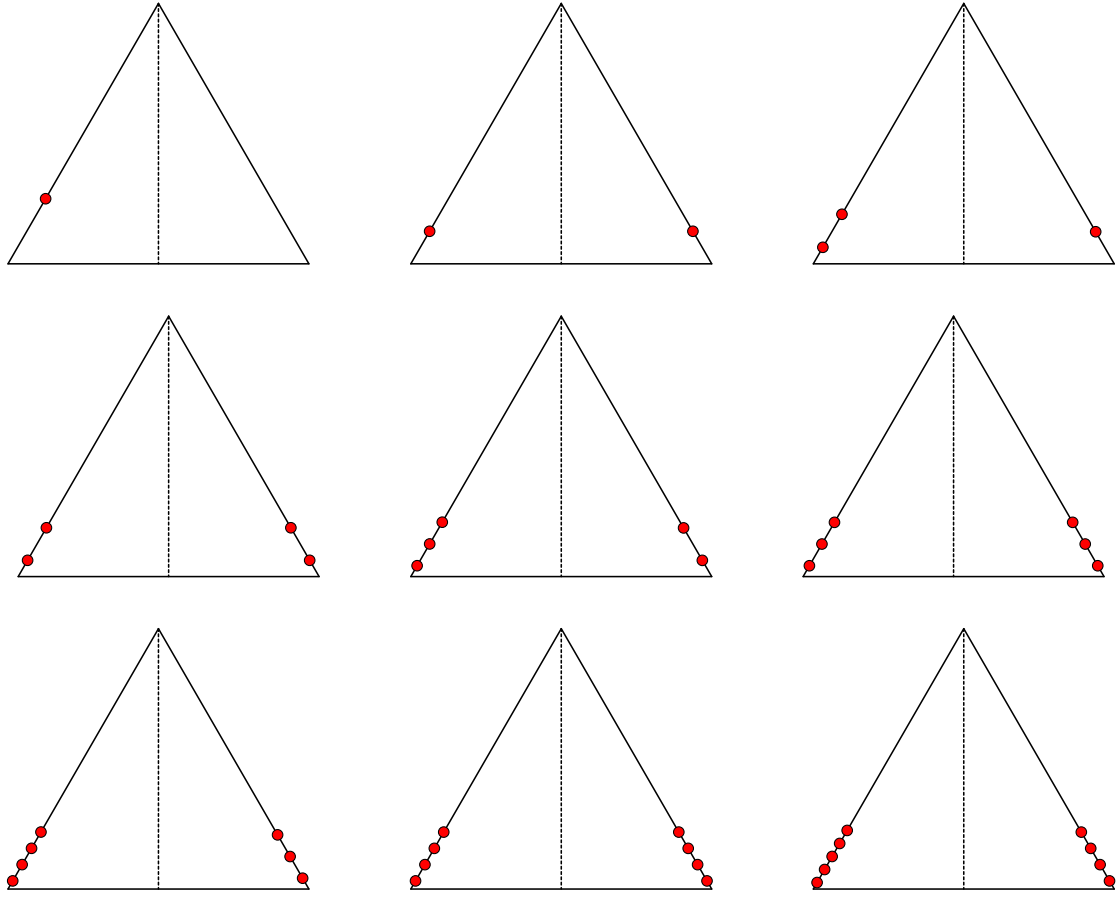
\begin{figure}
  \begin{tikzpicture}[line cap=round,line join=round,>=triangle 45, x=1.0cm,y=1.0cm]
\clip(-0.270613645021906,-0.38022889680252) rectangle (4.53303695897085,3.743191138497082);
\draw [line width=0.59 pt] (0.,0.)-- (4.,0.);
\draw [line width=0.59 pt] (0.,0.)-- (2.,3.4641016151377544);
\draw [line width=0.59 pt] (2.,3.4641016151377544)-- (4.,0.);
\draw [line width=0.59 pt,dash pattern=on 1pt off 1pt] (2.,3.4641)-- (2.,0.);
\begin{scriptsize}
\draw [fill=ffqqqq] (0.5,0.8660254037844386) circle (2.0pt);
\end{scriptsize}
\end{tikzpicture} \quad   
\begin{tikzpicture}[line cap=round,line join=round,>=triangle 45, x=1.0cm,y=1.0cm]
\clip(-0.270613645021906,-0.38022889680252) rectangle (4.53303695897085,3.743191138497082);
\draw [line width=0.59 pt] (0.,0.)-- (4.,0.);
\draw [line width=0.59 pt] (0.,0.)-- (2.,3.4641016151377544);
\draw [line width=0.59 pt] (2.,3.4641016151377544)-- (4.,0.);
\draw [line width=0.59 pt,dash pattern=on 1pt off 1pt] (2.,3.4641)-- (2.,0.);
\begin{scriptsize}
\draw [fill=ffqqqq] (0.25, 0.433013) circle (2.0pt);
\draw [fill=ffqqqq] (3.75, 0.433013) circle (2.0pt);
\end{scriptsize}
\end{tikzpicture}
\quad  \begin{tikzpicture}[line cap=round,line join=round,>=triangle 45, x=1.0cm,y=1.0cm]
\clip(-0.270613645021906,-0.38022889680252) rectangle (4.53303695897085,3.743191138497082);
\draw [line width=0.59 pt] (0.,0.)-- (4.,0.);
\draw [line width=0.59 pt] (0.,0.)-- (2.,3.4641016151377544);
\draw [line width=0.59 pt] (2.,3.4641016151377544)-- (4.,0.);
\draw [line width=0.59 pt,dash pattern=on 1pt off 1pt] (2.,3.4641)-- (2.,0.);
\begin{scriptsize}
\draw [fill=ffqqqq] (0.126857, 0.219723) circle (2.0pt);
\draw [fill=ffqqqq] (0.380571, 0.659168) circle (2.0pt);
\draw [fill=ffqqqq] (3.75371, 0.42658) circle (2.0pt);
\end{scriptsize}
\end{tikzpicture}
\quad
   \begin{tikzpicture}[line cap=round,line join=round,>=triangle 45, x=1.0cm,y=1.0cm]
\clip(-0.270613645021906,-0.38022889680252) rectangle (4.53303695897085,3.743191138497082);
\draw [line width=0.59 pt] (0.,0.)-- (4.,0.);
\draw [line width=0.59 pt] (0.,0.)-- (2.,3.4641016151377544);
\draw [line width=0.59 pt] (2.,3.4641016151377544)-- (4.,0.);
\draw [line width=0.59 pt,dash pattern=on 1pt off 1pt] (2.,3.4641)-- (2.,0.);
\begin{scriptsize}
\draw [fill=ffqqqq] (0.125, 0.216506) circle (2.0pt);
\draw [fill=ffqqqq] (0.375, 0.649519) circle (2.0pt);
\draw [fill=ffqqqq] (3.625, 0.649519) circle (2.0pt);
\draw [fill=ffqqqq] (3.875, 0.216506) circle (2.0pt);
\end{scriptsize}
\end{tikzpicture}\quad
\begin{tikzpicture}[line cap=round,line join=round,>=triangle 45, x=1.0cm,y=1.0cm]
\clip(-0.270613645021906,-0.38022889680252) rectangle (4.53303695897085,3.743191138497082);
\draw [line width=0.59 pt] (0.,0.)-- (4.,0.);
\draw [line width=0.59 pt] (0.,0.)-- (2.,3.4641016151377544);
\draw [line width=0.59 pt] (2.,3.4641016151377544)-- (4.,0.);
\draw [line width=0.59 pt,dash pattern=on 1pt off 1pt] (2.,3.4641)-- (2.,0.);
\begin{scriptsize}
\draw [fill=ffqqqq] (0.0835709, 0.144749) circle (2.0pt);
\draw [fill=ffqqqq] (0.250713, 0.434247) circle (2.0pt);
\draw [fill=ffqqqq] (0.417854, 0.723745) circle (2.0pt);
\draw [fill=ffqqqq] (3.62607, 0.647668) circle (2.0pt);
\draw [fill=ffqqqq] (3.87536, 0.215889) circle (2.0pt);
\end{scriptsize}
\end{tikzpicture}\quad
\begin{tikzpicture}[line cap=round,line join=round,>=triangle 45, x=1.0cm,y=1.0cm]
\clip(-0.270613645021906,-0.38022889680252) rectangle (4.53303695897085,3.743191138497082);
\draw [line width=0.59 pt] (0.,0.)-- (4.,0.);
\draw [line width=0.59 pt] (0.,0.)-- (2.,3.4641016151377544);
\draw [line width=0.59 pt] (2.,3.4641016151377544)-- (4.,0.);
\draw [line width=0.59 pt,dash pattern=on 1pt off 1pt] (2.,3.4641)-- (2.,0.);
\begin{scriptsize}
\draw [fill=ffqqqq] (0.0833333, 0.144338) circle (2.0pt);
\draw [fill=ffqqqq] (0.25, 0.433013) circle (2.0pt);
\draw [fill=ffqqqq] (0.416667, 0.721688) circle (2.0pt);
\draw [fill=ffqqqq] (3.58333, 0.721688) circle (2.0pt);
\draw [fill=ffqqqq] (3.75, 0.433013) circle (2.0pt);
\draw [fill=ffqqqq] (3.91667, 0.144338) circle (2.0pt);
\end{scriptsize}
\end{tikzpicture}
\quad
\begin{tikzpicture}[line cap=round,line join=round,>=triangle 45, x=1.0cm,y=1.0cm]
\clip(-0.270613645021906,-0.38022889680252) rectangle (4.53303695897085,3.743191138497082);
\draw [line width=0.59 pt] (0.,0.)-- (4.,0.);
\draw [line width=0.59 pt] (0.,0.)-- (2.,3.4641016151377544);
\draw [line width=0.59 pt] (2.,3.4641016151377544)-- (4.,0.);
\draw [line width=0.59 pt,dash pattern=on 1pt off 1pt] (2.,3.4641)-- (2.,0.);
\begin{scriptsize}
\draw [fill=ffqqqq] (0.0625628, 0.108362) circle (2.0pt);
\draw [fill=ffqqqq] (0.187689, 0.325086) circle (2.0pt);
\draw [fill=ffqqqq] (0.312814, 0.54181) circle (2.0pt);
\draw [fill=ffqqqq] (0.43794, 0.758534) circle (2.0pt);
\draw [fill=ffqqqq] (3.58375, 0.720962) circle (2.0pt);
\draw [fill=ffqqqq] (3.75025, 0.432577) circle (2.0pt);
\draw [fill=ffqqqq] (3.91675, 0.144192) circle (2.0pt);
\end{scriptsize}
\end{tikzpicture} \quad
\begin{tikzpicture}[line cap=round,line join=round,>=triangle 45, x=1.0cm,y=1.0cm]
\clip(-0.270613645021906,-0.38022889680252) rectangle (4.53303695897085,3.743191138497082);
\draw [line width=0.59 pt] (0.,0.)-- (4.,0.);
\draw [line width=0.59 pt] (0.,0.)-- (2.,3.4641016151377544);
\draw [line width=0.59 pt] (2.,3.4641016151377544)-- (4.,0.);
\draw [line width=0.59 pt,dash pattern=on 1pt off 1pt] (2.,3.4641)-- (2.,0.);
\begin{scriptsize}
\draw [fill=ffqqqq] (0.0625, 0.108253) circle (2.0pt);
\draw [fill=ffqqqq] (0.1875, 0.32476) circle (2.0pt);
\draw [fill=ffqqqq] (0.3125, 0.541266) circle (2.0pt);
\draw [fill=ffqqqq] (0.4375, 0.757772) circle (2.0pt);
\draw [fill=ffqqqq] (3.5625, 0.757772) circle (2.0pt);
\draw [fill=ffqqqq] (3.6875, 0.541266) circle (2.0pt);
\draw [fill=ffqqqq] (3.8125, 0.32476) circle (2.0pt);
\draw [fill=ffqqqq] (3.9375, 0.108253) circle (2.0pt);
\end{scriptsize}
\end{tikzpicture}
\quad
\begin{tikzpicture}[line cap=round,line join=round,>=triangle 45, x=1.0cm,y=1.0cm]
\clip(-0.270613645021906,-0.38022889680252) rectangle (4.53303695897085,3.743191138497082);
\draw [line width=0.59 pt] (0.,0.)-- (4.,0.);
\draw [line width=0.59 pt] (0.,0.)-- (2.,3.4641016151377544);
\draw [line width=0.59 pt] (2.,3.4641016151377544)-- (4.,0.);
\draw [line width=0.59 pt,dash pattern=on 1pt off 1pt] (2.,3.4641)-- (2.,0.);
\begin{scriptsize}
\draw [fill=ffqqqq] (0.0500233, 0.086643) circle (2.0pt);
\draw [fill=ffqqqq] (0.15007, 0.259929) circle (2.0pt);
\draw [fill=ffqqqq] (0.250117, 0.433215) circle (2.0pt);
\draw [fill=ffqqqq] (0.350163, 0.606501) circle (2.0pt);
\draw [fill=ffqqqq] (0.45021, 0.779787) circle (2.0pt);
\draw [fill=ffqqqq] (3.5627, 0.757419) circle (2.0pt);
\draw [fill=ffqqqq] (3.68765, 0.541013) circle (2.0pt);
\draw [fill=ffqqqq] (3.81259, 0.324608) circle (2.0pt);
\draw [fill=ffqqqq] (3.93753, 0.108203) circle (2.0pt);
\end{scriptsize}
\end{tikzpicture}
\caption{Optimal configuration of $n$-points for $1\leq n\leq 9$.} \label{Fig1}
\end{figure}

 \section{Constrained quantization dimension and constrained quantization coefficient} \label{sec4}
In this section, we show that the constrained quantization dimension $D(P)$ of the uniform distribution $P$ exists and equals one. We further show that the $D(P)$-dimensional constrained quantization coefficient exists as a finite positive number.

\begin{theorem}\label{theo3} 
The constrained quantization dimension $D(P)$ of the probability measure $P$ exists, and $D(P)=1$. 
\end{theorem}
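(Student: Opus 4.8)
The plan is to feed the explicit formulas for $V_n(P)$ obtained above into the definition of $D(P)$, so the whole argument reduces to reading off the order of magnitude of $V_n(P)-V_\infty(P)$. First I would identify $V_\infty(P)$. By Corollary~\ref{corD1}, for every even $n=2k$ with $k\geq 3$ we have
\[V_n(P)=\frac{1}{48}\left(12+\frac{1}{k^2}\right)=\frac14+\frac{1}{48k^2},\]
so letting $n\to\infty$ along even integers gives $\lim_n V_n(P)=\frac14$. Since the sequence $\big(V_n(P)\big)$ is decreasing (and, as the worked values and formulas show, strictly decreasing), this is precisely $V_\infty(P)=\frac14$.

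Next I would establish the two-sided estimate $V_n(P)-V_\infty(P)\asymp n^{-2}$. For even $n=2k$ this is immediate from the displayed formula: $V_n(P)-V_\infty(P)=\frac{1}{48k^2}=\frac{1}{12n^2}$. For odd $n=2k+1$ I would avoid the heavier expression coming from Theorem~\ref{theo1} and instead use monotonicity of $\big(V_n(P)\big)$: from $V_{2k+2}(P)\leq V_{2k+1}(P)\leq V_{2k}(P)$ and the even-case formula we get
\[\frac{1}{48(k+1)^2}\leq V_{2k+1}(P)-\frac14\leq\frac{1}{48k^2},\]
and substituting $k=\frac{n-1}{2}$ shows that both bounds are of order $n^{-2}$. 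Altogether there are constants $0<c_1\leq c_2$ with $c_1 n^{-2}\leq V_n(P)-V_\infty(P)\leq c_2 n^{-2}$ for all large $n$ (and in particular $V_n(P)-V_\infty(P)>0$, so the logarithm below is well defined).

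Finally, taking logarithms gives $-\log\big(V_n(P)-V_\infty(P)\big)=2\log n+O(1)$, hence
\[\frac{2\log n}{-\log\big(V_n(P)-V_\infty(P)\big)}=\frac{2\log n}{2\log n+O(1)}\longrightarrow 1,\]
so $D(P)$ exists and equals $1$. I do not expect a genuine obstacle: the computation for even $n$ is exact, and the only mild subtlety is the odd-index subsequence, which the sandwich argument above dispatches without needing the full odd-case asymptotics from Theorem~\ref{theo1}. (If a cleaner statement is wanted, one can also expand that formula directly with $\ell=k+1$, $m=k$ to get $V_n(P)-\frac14=\frac{c}{n^2}+O(n^{-3})$ for an explicit $c>0$, which yields the same limit; this more precise estimate is in any case what is needed for the constrained quantization coefficient in the next result.)
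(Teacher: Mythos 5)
Your proposal is correct and follows essentially the same route as the paper: identify $V_\infty=\frac14$ from the exact even-index formula $V_{2k}=\frac14+\frac1{48k^2}$ of Corollary~\ref{corD1}, sandwich the remaining indices between consecutive even ones using the monotonicity of $(V_n)$, and conclude that the ratio $\frac{2\log n}{-\log(V_n-V_\infty)}$ tends to $1$. The only cosmetic difference is that you package the sandwich as a two-sided bound $V_n-V_\infty\asymp n^{-2}$ before taking logarithms, whereas the paper sandwiches the logarithmic ratio directly.
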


\begin{proof}
For $n\in \D N$ with $n\geq 6$, let $\ell(n)$ be the unique natural number such that $2{\ell(n)}\leq n<2 ({\ell(n)+1})$. Then,
$V_{2(\ell(n)+1)}\leq V_n\leq V_{2{\ell(n)}}$. By Corollary~\ref{corD1}, we see that $V_{2({\ell(n)+1})}\to \frac 14$ and  $V_{2{\ell(n)}}\to \frac 14 $ as $n\to \infty$, and so $V_n\to \frac 14$ as $n\to \infty$,
i.e., $V_\infty=\frac 14$.
We can take $n$ large enough so that $ V_{2{\ell(n)}}-V_{\infty}<1$. Then,
\[0<-\log (V_{2{\ell(n)}}-V_{\infty})\leq -\log (V_n-V_\infty)\leq -\log (V_{2({\ell(n)+1})}-V_\infty)\]
yielding
\[\frac{2 \log 2\ell(n)}{-\log (V_{2({\ell(n)+1})}-V_\infty)}\leq \frac{2\log n}{-\log(V_n-V_\infty)}\leq \frac{2 \log 2(\ell(n)+1)}{-\log (V_{2{\ell(n)}}-V_\infty)}.\]
Notice that
\begin{align*}
\lim_{n\to \infty} \frac{2 \log 2\ell(n)}{-\log (V_{2({\ell(n)+1})}-V_\infty)}&=\lim_{n\to \infty} \frac{2 \log 2\ell(n)}{-\log \frac 1{48(\ell(n)+1)^2}}=1, \te{ and }\\
\lim_{n\to \infty} \frac{2 \log 2(\ell(n)+1)}{-\log (V_{2{\ell(n)}}-V_\infty)}&=\lim_{n\to \infty} \frac{2 \log 2(\ell(n)+1)}{-\log \frac 1{48(\ell(n))^2}}=1.
\end{align*}
Hence, $\lim_{n\to \infty}  \frac{2\log n}{-\log(V_n-V_\infty)}=1$, i.e., the constrained quantization dimension $D(P)$ of the probability measure $P$ exists and $D(P)=1$.
Thus, the proof of
the theorem is complete.
\end{proof}

\begin{theorem} \label{theo4} 
The $D(P)$-dimensional constrained quantization coefficient for $P$ exists as a finite positive number and equals $\frac 1{12}$.
\end{theorem}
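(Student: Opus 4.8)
The plan is to compute $\lim_n n^{2/D(P)}(V_n - V_\infty)$ directly, using $D(P) = 1$ from Theorem~\ref{theo3}, so that the target limit is $\lim_n n^2 (V_n - \tfrac14)$. I will split into the even and odd cases according to Remark~\ref{remD11}. For the even case $n = 2k$ with $k \geq 3$, Corollary~\ref{corD1} gives the closed form $V_n = \tfrac{1}{48}(12 + \tfrac{1}{k^2}) = \tfrac14 + \tfrac{1}{48 k^2}$, so $V_n - V_\infty = \tfrac{1}{48 k^2}$ and hence $n^2(V_n - V_\infty) = 4k^2 \cdot \tfrac{1}{48 k^2} = \tfrac{1}{12}$; this is exact for every even $n$, not just in the limit.

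For the odd case $n = 2k+1$, Remark~\ref{remD11} tells us the optimal split is $\{\ell, m\} = \{k+1, k\}$, so I substitute $\ell = k+1$, $m = k$ into the formula for $V_n$ from Theorem~\ref{theo1},
\[
V_n=\frac{\left(12 \ell^2+1\right) \left(12 m^2+1\right)}{12 \left(24 \ell^2 m^2+2 \ell m \sqrt{\left(12 \ell^2+1\right) \left(12 m^2+1\right)}+\ell^2+m^2\right)}.
\]
The key step is to show $V_n - \tfrac14 = O(1/k^2)$ with leading coefficient giving $n^2(V_n - \tfrac14) \to \tfrac{1}{12}$. Writing $V_n - \tfrac14$ over the common denominator, the numerator is
\[
(12\ell^2+1)(12m^2+1) - 3\big(24\ell^2 m^2 + 2\ell m \sqrt{(12\ell^2+1)(12m^2+1)} + \ell^2 + m^2\big),
\]
which after expanding the product $= 144\ell^2m^2 + 12\ell^2 + 12 m^2 + 1$ becomes $12\ell^2 m^2 + 9\ell^2 + 9 m^2 + 1 - 6\ell m\sqrt{(12\ell^2+1)(12 m^2+1)}$. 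Setting $\ell = k+1$, $m = k$ and using the expansion $\sqrt{(12\ell^2+1)(12m^2+1)} = 12\ell m\sqrt{(1 + \tfrac{1}{12\ell^2})(1+\tfrac{1}{12 m^2})} = 12\ell m + \tfrac{\ell}{2m} + \tfrac{m}{2\ell} + O(k^{-2})$, one finds the numerator is $O(1)$ as $k \to \infty$ (the $\ell^2 m^2$ and $\ell^2, m^2$ terms cancel against the product $6\ell m \cdot 12 \ell m$ and $6\ell m(\tfrac{\ell}{2m}+\tfrac{m}{2\ell})$), while the denominator $12(24\ell^2 m^2 + \cdots)$ is $\sim 288 k^4$; hence $V_n - \tfrac14 \sim (\text{const})/k^4 \cdot O(1)$ — I will need to track the numerator's exact leading constant, which I expect to be $\sim \tfrac32 k^2$ (from the $9\ell^2 + 9m^2 - 6\ell m \cdot(\tfrac{\ell}{2m}+\tfrac{m}{2\ell}) = 9\ell^2 + 9m^2 - 3\ell^2 - 3m^2 = 6(\ell^2+m^2)$-type terms, plus corrections), giving $V_n - \tfrac14 \sim \tfrac{6\cdot 2k^2}{288 k^4} = \tfrac{1}{24 k^2}$, and then $n^2(V_n - \tfrac14) \sim 4k^2 \cdot \tfrac{1}{24 k^2} = \tfrac16$ — wait, this must be reconciled with the even case, which forces the answer $\tfrac{1}{12}$; since $n = 2k+1$ gives $n^2 \sim 4k^2$ just like $n=2k$, and the limit must exist, the odd-case numerator constant must actually come out to half of my rough guess.

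The main obstacle will therefore be the careful asymptotic bookkeeping in the odd case: I must expand $\sqrt{(12\ell^2+1)(12m^2+1)}$ to sufficient order (through the $O(k^{-2})$ term relative to the leading $12\ell m$, equivalently absolute order $O(k^0)$) with $\ell = k+1$, $m = k$, substitute into the numerator $12\ell^2 m^2 + 9\ell^2 + 9m^2 + 1 - 6\ell m\sqrt{(12\ell^2+1)(12m^2+1)}$, and confirm that after all cancellations it is asymptotic to a constant multiple of $k^2$ whose value, divided by the denominator's leading term $288 k^4$ and multiplied by $n^2 \sim 4k^2$, yields exactly $\tfrac{1}{12}$ — matching the even case so that the two subsequences agree and the full limit $\lim_n n^2(V_n - V_\infty) = \tfrac{1}{12}$ exists. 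I would finish by noting that since both the even and odd subsequences converge to $\tfrac{1}{12}$, the limit over all $n$ exists and equals $\tfrac{1}{12}$, which is finite and positive; this is the assertion of the theorem.
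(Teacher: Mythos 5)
Your even case is exact and fine, but the odd case --- which you yourself flag as ``the main obstacle'' --- is neither completed nor correctly set up, and you patch the resulting factor-of-two discrepancy by circular reasoning: when your rough count yields $\tfrac16$ instead of $\tfrac1{12}$, you declare that the numerator constant ``must'' be half your guess because ``the limit must exist.'' The existence of the limit is part of what the theorem asserts, so it cannot be invoked to repair the computation. Moreover, the discrepancy is not where you locate it. Two concrete slips: first, $144\ell^2m^2-3\cdot24\ell^2m^2=72\ell^2m^2$, not $12\ell^2m^2$, so with $S=\sqrt{(12\ell^2+1)(12m^2+1)}$ the numerator of $V_n-\tfrac14$ is $72\ell^2m^2+9\ell^2+9m^2+1-6\ell m S$; since $S=12\ell m+\tfrac{\ell}{2m}+\tfrac{m}{2\ell}+O(k^{-2})$, this numerator equals $6(\ell^2+m^2)+O(1)\sim 12k^2$, so your numerator estimate was in fact right. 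Second, the denominator $12\bigl(24\ell^2m^2+2\ell mS+\ell^2+m^2\bigr)$ has leading term $12\bigl(24\ell^2m^2+24\ell^2m^2\bigr)\sim 576k^4$, not $288k^4$: you dropped the contribution $2\ell mS\sim24\ell^2m^2$. With both corrections, $V_n-\tfrac14\sim\tfrac{12k^2}{576k^4}=\tfrac1{48k^2}$ and $n^2(V_n-\tfrac14)\to\tfrac{4}{48}=\tfrac1{12}$, so your route does close --- but only after the bookkeeping you deferred is actually done and done correctly.

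For comparison, the paper avoids the odd case entirely. Since $(V_n)$ is decreasing, for $2\ell(n)\le n<2(\ell(n)+1)$ one has $V_{2(\ell(n)+1)}\le V_n\le V_{2\ell(n)}$, and Corollary~\ref{corD1} gives the exact even-index values $V_{2k}-\tfrac14=\tfrac1{48k^2}$; squeezing $n^2(V_n-V_\infty)$ between $(2\ell(n))^2\cdot\tfrac{1}{48(\ell(n)+1)^2}$ and $(2(\ell(n)+1))^2\cdot\tfrac{1}{48\ell(n)^2}$ yields $\tfrac1{12}$ with no expansion of the square root at all. Either carry your direct odd-case expansion through to the end as indicated above, or adopt the monotonicity squeeze; as written, the proof has a genuine gap.
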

\begin{proof}
For $n\in \D N$ with $n\geq 6$, let $\ell(n)$ be the unique natural number such that $2{\ell(n)}\leq n<2 ({\ell(n)+1})$. Then,
$V_{2(\ell(n)+1)}\leq V_n\leq V_{2{\ell(n)}}$, and $V_\infty=\lim_{n\to\infty} V_n=\frac 14$. Since
\begin{align*}
&\lim_{n\to \infty} n^2 (V_n-V_\infty)\geq \lim_{n\to \infty} (2\ell(n))^2 (V_{2(\ell(n)+1)}-V_\infty)=\lim_{n\to\infty}(2\ell(n))^2\frac 1{48(\ell(n)+1)^2}=\frac 1{12}, \te{ and } \\
&\lim_{n\to \infty} n^2 (V_n-V_\infty)\leq \lim_{n\to \infty} (2(\ell(n)+1))^2 (V_{2\ell(n)}-V_\infty)=\lim_{n\to\infty}(2(\ell(n)+1))^2\frac 1{48(\ell(n))^2}=\frac 1{12},
\end{align*}
by the squeeze theorem, we have $\lim_{n\to \infty} n^2 (V_n-V_\infty)=\frac 1{12}$, which is the theorem. 
\end{proof} 

\begin{remark} \label{rem1}
For the absolutely continuous probability measure, considered in this paper, we have obtained that the constrained quantization dimension is one which equals the dimension of the underlying space where the support of the probability measure is defined. This fact is not true, in general, in constrained quantization, for example, one can see \cite{PR1, PR3}. In this regard, we would like to mention that the unconstrained quantization dimension of an absolutely continuous probability measures always equals the dimension of the underlying space where the support of the probability measure is defined (see \cite{BW}).
\end{remark} 
 
%

\end{document}